\theoremstyle{plain}
\newtheorem{prop}{Proposition}
\newtheorem{thm}{Theorem}
\newtheorem{lem}{Lemma}
\newtheorem{cor}{Corollary}
\theoremstyle{definition}
\newtheorem{example}{Example}
\newtheorem{defn}{Definition}
\newtheorem{theom}{Theorem}
\theoremstyle{remark}
\newtheorem*{rem}{Remark}
\newcommand{\lie}[1]{\mathfrak{#1}}
\newcommand\bz{\mathbb Z}
\DeclareMathOperator{\htt}{ht}
\DeclareMathOperator{\mult}{mult}
\DeclareMathOperator{\supp}{supp}
\DeclareMathOperator{\type}{type}
\newenvironment{pf}{\proof}{\endproof}
\newcounter{cnt}
\def\mydggeometry{\makeatletter\dg@YGRID=1\dg@XGRID=20\unitlength=0.003pt\makeatother}
\makeatother \theoremstyle{remark}
\numberwithin{equation}{section}
\def\section{\def\@secnumfont{\mdseries}\@startsection{section}{1}%
  \z@{.7\linespacing\@plus\linespacing}{.5\linespacing}%
  {\normalfont\scshape\centering}}
\def\subsection{\def\@secnumfont{\bfseries}\@startsection{subsection}{2}%
  {\parindent}{.5\linespacing\@plus.7\linespacing}{-.5em}%
  {\normalfont\bfseries}}
\begin{document}


\title{Chromatic symmetric function of graphs from Borcherds algebras}


\author{G. Arunkumar}
\thanks{The author is grateful to the Indian Institute of Science Education and Research, Mohali for the institute postdoctoral fellowship.}
\address{Indian Institute of Science Education and Research, Mohali, India}
\email{arun.maths123@gmail.com, gakumar@iisermohali.ac.in.}

\subjclass [2010]{05C15, 05C31, 05E05, 05E15, 17B01, 17B67}
\keywords{Borcherds algebras, Weyl denominator identity, Chromatic symmetric functions, $G$-symmetric functions. Free partially commutative Lie algebras}

\begin{abstract}

Let $\lie g$ be a Borcherds algebra with the associated graph $G$. 
We prove that the chromatic symmetric function of $G$ can be recovered from the Weyl denominator identity of $\lie g$ and this gives a Lie theoretic proof of Stanley's expression for chromatic symmetric function in terms of power sum symmetric functions. Also, this gives an expression for the chromatic symmetric function of $G$ in terms of root multiplicities of $\lie g$. We prove a modified Weyl denominator identity for Borcherds algebras which is an extension of the celebrated classical Weyl denominator identity and this plays an important role in the proof our results. The absolute value of the linear coefficient of the chromatic polynomial of $G$ is known as the chromatic discriminant of $G$. As an application of our main theorem, we prove that certain coefficients appearing in the above said expression of  chromatic symmetric function is equal to the chromatic discriminant of $G$.  Also, we find a connection between the Weyl denominators and the $G$-elementary symmetric functions. Using this connection, we give a Lie theoretic proof of non-negativity of coefficients of $G$-power sum symmetric functions. 
\end{abstract}
\maketitle
\section{Introduction}

Borcherds algebras were introduced by R. Borcherds in \cite{Bor88} which are a natural generalization of Kac-Moody Lie algebras. 
Our definition of Borcherds algebras follows the approach of \cite{Ej96}, in particular, the simple roots $\alpha_i$, $i\in I$ are linearly independent. We are interested in the denominator identity of Borcherds algebras. The denominator identity of Lie algebras is appearing in many places in combinatorics. In the celebrated paper of Macdonald \cite{mac72},  the denominator identities of various Lie algebras are shown to have many interesting connections with important identities in Combinatorics. For example, in the case $\lie {sl}_n(\mathbb C)$, denominator identity is the same as the Vondermonde's determinant identity. In the case of affine Lie algebras, these are the Macdonald identities. See also \cite{lepo78,kang96}.

 In this paper, we prove that the denominator identities are related to the chromatic symmetric functions of graphs. Let $G$ be a simple connected graph with vertex set $I$ and edge set $E$. In \cite{S95}, Stanley introduced the chromatic symmetric function $X_G$ of $G$ as a symmetric function generalization of the chromatic polynomial of $G$:
 Let $\mathcal{P}(S)$ be the power set of a set $S$. For a tuple of non-negative integers $\mathbf{k}=(k_i: i\in I)$, we define $\mathrm{supp}(\bold k)=\{i\in I: k_i\neq 0\}$.
The $\bold k$-multicoloring of the graph $G$ is a generalization of the well-known graph coloring which is defined as follows: Let $\mathbf{k}=(k_i: i\in I)$ a tuple of non--negative integers such that $|\mathrm{supp}(\bold k)|<\infty$. We call a map $\tau: I\rightarrow \mathcal{P}\big(\mathbb{N}\big)$ a proper vertex $\bold k$-multicoloring of $G$ if the following conditions are satisfied:
 \begin{enumerate}
 	\item[(i)]  For all $i\in I$ we have $|\tau(i)|=k_i$, 
 	\item[(ii)] For all $i,j\in I$ such that $(i,j)\in E(G)$ we have $\tau(i)\cap \tau(j)=\emptyset$.
 \end{enumerate}
 
 In other words, no two adjacent vertices share the same color. 
 Note that $k_i=1$ for $i\in I$  corresponds to the classical graph coloring of the graph $G$. The number of $\bold k$-multicoloring of $G$ using $q$ colors is counted by the number of $\bold k$-multicolorings $\tau$ such that $\tau(i) \subset \{1,2,\dots,q\} \,\forall\, i \in I$. It is well-known that this number is a polynomial in $q$, called the generalized chromatic polynomial $\pi_\mathbf{k}^G(q)$ \cite{akv}. When $\bold k: = \bold 1 = (1,1,\dots,1)$ $\pi_{\bold k}^G(q)$ represents the classical chromatic polynomial of $G$.
 
 \begin{rem}\label{connection}
 	There is a close relationship between the ordinary chromatic polynomials and the generalized chromatic polynomials. We have
 	\begin{equation}\label{1=k}
 	\pi_\bold k^G(q)=\frac{1}{\bold k!}\pi_{\bold 1}^{G(\bold k)}(q)
 	\end{equation}
 	where $\pi_{\bold 1}^{G(\bold k)}(q) \ \text{is the chromatic polynomial of the graph
 		$G(\bold k)$}$ and $\bold k!=\prod_{i\in I}k_i!$. The graph $G(\bold k)$ is the join of $G$ with respect to $\bold k$ which is constructed as follows:
 	For each $j\in \mathrm{supp}(\bold k)$, take a clique (complete graph) of size $k_j$ with vertex set $\{j^1,\dots, j^{k_j}\}$ and join all vertices of the $r$--th and $s$--th cliques if 
 	$(r,s)\in E(G).$ For more details about the multicoloring of a graph, we refer to  \cite{akv,HMK04} .
 \end{rem}
 Given this notion of vertex multicoloring we have the following definition of $\bold k$-chromatic symmetric functions.
 \begin{defn}\cite{S95,S98}
 	Let $X_1, X_2,\dots$ be a collection of commuting indeterminates. Let $\tau$ be a $\bold k$-multicoloring of $G$, we define $X(\tau) = \prod_{i\,\in\, I} \Big(\prod_{m \,\in\, \tau(i)} X_m\Big)$. The $\bold k$-chromatic symmetric function, denoted by $X^{\bold k}_{G}$, is defined as follows: 
 	
 	\begin{equation} \label{k-chsym}
 	X_{G}^{\bold k}= \sum_{\substack{\tau \\ \bold k-\text{proper} \\ \text{coloring} }} X(\tau) = \sum_{\substack{\tau \\ \bold k-\text{proper} \\ \text{coloring} }} \prod_{i\,\in\, I} \Big(\prod_{m \,\in\, \tau(i)} X_m\Big)
 	\end{equation}
 	
 \end{defn}
 \begin{rem}
 	We observe that $$X_G^{\bold k}(\underbrace{1,\dots,1}_{q\text{-times}},0,0,\dots) = \pi_{\bold k}^G(q).$$
 	
 	Also, when $\bold k = \bold 1 =(1,1,\dots,1)$ the $\bold k$-chromatic symmetric function of $G$ reduces to the chromatic symmetric function introduced in \cite{S95}. In this case, the $\bold k$-chromatic symmetric function $X_G^{\bold k}$ is simply denoted by $X_G$. 
 \end{rem}

 We explore the relation between chromatic symmetric functions and the root multiplicities of Borcherds algebras. This way the existing combinatorial results concerning denominator identity and the root multiplicities of Borcherds algebras might shed light on the problems in the theory of chromatic symmetric functions.

Let $A=(a_{ij})_{i,j\in I}$ be a Borcherds-Cartan matrix indexed by a finite or countably infinite set $I$ (Section \ref{bcm}). Let $\lie g$ be the Borcherds algebra associated with $A$ (Section \ref{ba}). In \cite[Section 2.1]{wm95}, the following notion of quasi-Dynkin diagram $G$ of the Borcherds algebra $\lie g$ is defined. The graph $G$ has vertex set $I$, with an edge between two vertices $i$ and $j$ if, and only if, $a_{ij}\neq 0$ for $i,j\in I$, $i\neq j$.
Note that $G$ is a simple (finite or infinite) graph which we call simply the $\mathrm{graph \ of} \ \lie g$. \textit{In the sequence, whenever we talk about a graph $G$ the ambient Borcherds algebra is understood}.
A finite subset $S\subseteq I$ is said to be connected if the corresponding subgraph generated by $S$ is connected. A subset $S$ of the vertex set $I$ is said to be stable or independent if there exists no edge between any two elements of $S$. A maximum independent set is an independent set of largest possible size for $G$. The size of the maximum independent set in $G$ is called the independence number of $G$ and denoted by $\alpha(G)$.

In \cite{akv}, the relation between the $\bold k$-chromatic polynomial of $G$ and the root multiplicities of $\lie g$ has been studied [c.f. Section \ref{k-poly}].  In particular, the following theorem from \cite[Theorem 1]{akv} expresses the chromatic polynomial $\pi_{\bold 1}^G(q)$ of $G$ in terms of certain root multiplicities of $\lie g$. 

 \begin{thm}\label{chmply}
	\textit{	Let $G$ be the graph of a Borcherds algebra $\lie g$. Then	\begin{equation}\label{vv}
		\pi_{\bold 1}^G(q) = (-1)^{\htt(\eta(\bold 1)) } \sum_{\bold J \in L_G} (-1)^{|\bold J|} (\mult (\bold J)) \,q^{|\pi|},
		\end{equation}
		where $L_G$ is the bond lattice of $G$ [c.f. Definition \ref{bond}] and $\htt (\bold k) := \sum_{i \in I}k_i$} if $\bold k = (k_i:i \in I)$.
\end{thm}

The main tool used in the proof of Theorem \ref{chmply} is the following Weyl denominator identity of $\lie g$ from \cite{Bor88} and \cite[Theorem 3.16]{Ej96}. 
\begin{eqnarray}
U:=  \sum_{w \in W } (-1)^{\ell(w)} \sum_{ \gamma \in
	\Omega} (-1)^{\htt(\gamma)} e^{w(\rho -\gamma)-\rho } & =& \prod_{\alpha \in \Delta_+} (1 - e^{-\alpha})^{\dim \mathfrak{g}_{\alpha}} 
 \end{eqnarray}  
where $\Omega$ is the set of all $\gamma\in Q_+$ such that $\gamma$ is a finite sum of mutually orthogonal distinct imaginary simple roots. Note that $0\in \Omega$ and $\alpha\in \Omega,$ if $\alpha$ is an imaginary simple root.

Since the chromatic symmetric functions are a natural generalization of chromatic polynomials it is natural to ask for a connection between chromatic symmetric functions and Borcherds algebras. In this paper, we are interested in exploring this connection. For this reason, we generalize the above given denominator identity as follows. For an indeterminate $X$, we have
 \begin{eqnarray}
 U(X):=  \sum_{w \in W } (-1)^{\ell(w)} \sum_{ \gamma \in
 	\Omega} (-1)^{\htt(\gamma)} X^{-\htt(w(\rho -\gamma)-\rho)} e^{w(\rho -\gamma)-\rho } & =& \prod_{\alpha \in \Delta_+} (1 - X^{-\htt(\alpha)}e^{-\alpha})^{\dim \mathfrak{g}_{\alpha}} 
 \label{eq:mdenom} \end{eqnarray}  
 The proof of this identity is given in Proposition \ref{md}. We call this identity,  the modified Weyl denominator identity and $U(X)$ the modified Weyl denominator. We show that the chromatic symmetric function can be recovered from the modified Weyl denominator. We will prove the following expression for the chromatic symmetric function in terms of root multiplicities of the Borcherds algebra $\lie g$ which is an extension of Theorem \ref{chmply} to the case of chromatic symmetric functions.

 \begin{thm}\label{mainthm}
 	\textit{Let $G$ be the graph of a Borcherds algebra $\lie g$. 	Then
 			\begin{equation}\label{chmplyeq}
 			X_G = (-1)^{\htt (\eta(\bold 1)) }\sum_{\bold J \in L_G} (-1)^{|\bold J|}(\mult (\bold J)) \,p_{\type (\bold J)},
 			\end{equation}
 			where $L_G$ is the bond lattice of $G$.}
  \end{thm}
 In Example \ref{G4}, we have explained how the above result can be used to distinguish the graphs of order $4$ by their chromatic symmetric functions. This example shows that calculating certain root multiplicities might sufficient to show the given graphs are distinguished by their chromatic symmetric functions. The proof of this theorem and many motivating examples for this proof are given in Section \ref{mains}. As a corollary, using \cite[Proposition 1.4]{VV15}, we get a Lie theoretic proof of the following theorem of Stanley \cite[Theorem 2.6]{S95}.
 
 \begin{thm}\label{stan}
 	$$X_G = \sum_{\bold J \in L_G} \mu(\hat{0},\bold J) \,p_{\type (\bold J)},$$
 	where $L_G$ is the bond lattice of $G$.
 \end{thm}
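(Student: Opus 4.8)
The plan is to obtain Theorem~\ref{stan} as an immediate consequence of Corollary~\ref{imp} together with a combinatorial identification of the coefficients. Corollary~\ref{imp} already gives
\[
X_G \;=\; \sum_{\bold J \in L_G} (-1)^{\,l-|\bold J|}\,\mult(\bold J)\, p_{\type(\bold J)},
\]
where, for $\bold J\in L_G$ with underlying block set $\bar{\bold J}$, we have $\mult(\bold J)=\prod_{J\in\bar{\bold J}}\mult(\beta(J))$ with $\beta(J)=\sum_{i\in J}\alpha_i$. Since both the statement of Theorem~\ref{stan} and the expansion above are sums indexed by the same poset $L_G$, it suffices to establish, for every $\bold J\in L_G$, the coefficient identity
\[
\mu(\hat 0,\bold J)\;=\;(-1)^{\,l-|\bold J|}\,\mult(\bold J);
\]
I would record this as a lemma and then substitute it into Corollary~\ref{imp}.

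For the lemma I would first reduce to the connected case. Since $\bold J'\leq \bold J$ in the bond lattice means that $\bold J'$ refines $\bold J$ block by block, the interval $[\hat 0,\bold J]$ decomposes as the product poset $\prod_{J\in\bar{\bold J}} L_{G[J]}$, where $G[J]$ is the subgraph of $G$ induced on the block $J$. By multiplicativity of the M\"obius function over products, $\mu(\hat 0,\bold J)=\prod_{J\in\bar{\bold J}}\mu_{L_{G[J]}}(\hat 0,\hat 1)$, and since $\sum_{J\in\bar{\bold J}}(|J|-1)=l-|\bold J|$, the claimed identity reduces to the case of a connected graph: $\mu_{L_G}(\hat 0,\hat 1)=(-1)^{l-1}\mult(\beta)$ with $\beta=\sum_{i\in I}\alpha_i$. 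This is precisely the content of \cite[Proposition 1.4]{VV15}, which expresses the M\"obius number $\mu_{L_G}(\hat 0,\hat 1)$ of a connected graph in terms of $\mult(\beta)=\dim\lie g_{\beta}$, equivalently the dimension of the multilinear homogeneous component of the free partially commutative Lie algebra attached to $G$. Applying this to each induced subgraph $G[J]$ and taking the product over $J\in\bar{\bold J}$ yields the lemma, and hence the theorem.

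The only point requiring care is matching conventions across the three settings involved — the bond lattice $L_G$, the Borcherds root multiplicities $\mult(\beta(J))$ of $\lie g$, and the free partially commutative Lie algebra of \cite{VV15} — namely that the product over blocks $J\in\bar{\bold J}$ defining $\mult(\bold J)$ corresponds exactly to the product-poset decomposition of $[\hat 0,\bold J]$, and that the sign $(-1)^{l-|\bold J|}$ in Corollary~\ref{imp} is the product of the local signs $(-1)^{|J|-1}$. I do not expect any genuine obstacle here: the substantive work has already been done in Corollary~\ref{imp}, and this final theorem amounts to re-reading the factor $(-1)^{l-|\bold J|}\mult(\bold J)$ as the M\"obius number $\mu(\hat 0,\bold J)$ by way of \cite[Proposition 1.4]{VV15}.
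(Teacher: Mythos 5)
Your proposal is correct and follows essentially the same route as the paper: the paper obtains Theorem~\ref{stan} from Corollary~\ref{imp} precisely by invoking \cite[Proposition 1.4]{VV15} to identify the coefficient $(-1)^{l-|\bold J|}\mult(\bold J)$ with $\mu(\hat 0,\bold J)$. Your write-up merely makes explicit the standard details the paper leaves implicit, namely the isomorphism $[\hat 0,\bold J]\cong\prod_{J\in\bar{\bold J}}L_{G[J]}$, the multiplicativity of the M\"obius function, and the blockwise application of the cited proposition.
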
 

  In \cite{akv}, Theorem \ref{chmply} is proved for the more general $\bold k$-chromatic polynomials of $G$.  
Using the modified Weyl denominator identity, we will prove the following expression for the $\bold k$-chromatic symmetric functions in terms of root multiplicities of the Borcherds algebra $\lie g$ which is an extension of \cite[Theorem 1]{akv} to the case of chromatic symmetric functions.

\begin{thm}\label{mainkthm}
	\textit{	Let $G$ be the graph of a Borcherds algebra $\lie g$. For a fixed tuple of non-negative integers $\mathbf{k}=(k_i: i\in I)$ such that $k_i\leq 1$ for $i\in I^{\text{re}}$ 
		and $1<|\mathrm{supp}(\bold k)|<\infty$ we have
		\begin{equation}
		X^{\bold k}_G =(-1)^{\htt(\eta(\bold k))}\sum_{\substack{\bold J \in L_G(\bold k) \\  \bar{\bold J} = \{J_1,\dots,J_k\}}} (-1)^{|\bar{\bold J}|}\Bigg(\prod_{J \in \bar{\bold J} } \binom{\mult (\beta(J))}{D(J,\bold J)}\Bigg) p_{\type (\bold J)},
		\end{equation}
		where $\bar{\bold J}$ is the underlying set of the multiset $\bold J$.}
\end{thm}
The proof of Theorem \ref{mainkthm} is also given in Section \ref{mains}.

 The bond lattice $L_G$ consists of partitions of the vertex set $I$ in which each part induces a connected subgraph of $G$. We have assumed that $G$ is connected and so the vertex set $I$ itself is an element of $L_G$ and has partition type $(n)$ if $G$ has $n$ vertices. The absolute value of the linear coefficient of the chromatic polynomial of $G$ is known as the \emph{chromatic discriminant} of $G$ \cite{MR1861053,a2}. Equation \eqref{vv} implies that the chromatic discriminant of $G$ is equal to the dimension of the root space $\lie g_{\beta}$, where $\beta$ is the sum of all simple roots. Theorem \ref{mainthm} has the following corollary which gives the following interesting connection between the chromatic discriminant and the chromatic symmetric function of $G$. 
\begin{prop}\label{discriminant} The coefficient of $p_{(n)}$ in the chromatic symmetric function $X_G$ is equal to the chromatic discriminant of $G$. i.e.,
	$$X_G[p_{(n)}] = \text{ Chromatic discriminant of $G$}$$
\end{prop}
  We note that chromatic discriminant plays an important role in Example \ref{G4}. In Section \ref{G-syms}, we explore the connection between the denominator identity and the $G$-symmetric functions. The $G$-symmetric functions are introduced by Stanley in \cite{S98}. 
  \begin{defn}\label{G-symd}
  	  The $G$-analogue of the $i$th elementary symmetry function is defined as follows. $$e^{G}_i =  \sum_{S}\left(\prod_{\alpha \in S}e^{-\alpha}\right),$$ where $S$ ranges over all $i$-element stable subsets of the vertex set $I$ of $G$. 	Given a partition $\lambda = \lambda_1 \ge \lambda_2 \ge \dots \ge \lambda_k$, $k \in \mathbb{N}$, we define $e_{\lambda}^G= \prod_{i=1}^k e_{\lambda_i}^G$.
  \end{defn}
  
 Given this definition, we have the following result \cite[Proposition 2.1]{S98}  of Stanley which explains a connection between the chromatic symmetric functions and the $G$-elementary symmetric functions.
\begin{prop}\label{T=X}
	\textit{Let $$T(x,v) = \sum_{\substack{\lambda \\ \text{stable}}} m_{\lambda}(x) e_{\lambda}^G(v).$$ Then $$T(x,v) [e^{-\eta(\bold k)}] = X^{\bold k}_G.$$}
\end{prop}
We prove that the function $T(x,v)$ can be recovered from the modified Weyl denominator. This gives a different proof of the statement that the chromatic symmetric function can be recovered from the modified Weyl denominator.  Let $X_1,X_2,\dots$ be a commuting family of indeterminates and consider the modified Weyl denominators $U(X_i)$ associated with these indeterminates.
We prove that the function $T(x,v)$ can be recovered from $\prod_{i=1}^{\infty} U(X_i)$ using Lie theoretic ideas
which gives an alternate proof to the above proposition. This is done in Section \ref{G-syms}.

As an application, we give a Lie theoretic proof of the following theorem which talks about the non-negativity of the coefficients in the $G$-power sum symmetric functions. The following theorem is proved in \cite[Theorem 2.3]{S98}. 

\begin{thm}\label{power}
	The $G$-power sum symmetric function $p_{\lambda}^G$ is a  polynomial with non-negative integral coefficients.
\end{thm}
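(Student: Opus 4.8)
The plan is to deduce the non-negativity and integrality of $p_\lambda^G$ directly from the connection between the modified Weyl denominator and the $G$-elementary symmetric functions, together with the structure of the positive part of a suitable Borcherds algebra as a free partially commutative Lie algebra. Concretely, I would fix the Borcherds algebra $\lie g$ whose graph is $G$ and all of whose simple roots are imaginary, with Cartan entries $a_{ii}\le -1$ and $a_{ij}=0$ exactly when $\{i,j\}\notin E$. For such $\lie g$ the only relations among the generators $e_i$ are $[e_i,e_j]=0$ for the non-edges of $G$, so $\lie n_+$ is precisely the free partially commutative Lie algebra $\mathcal L(G)$; since its universal enveloping algebra is the free partially commutative associative algebra on $I$, a PBW count shows that every root multiplicity $\dim\lie g_\alpha=\mult(\alpha)$ is given by the generalized Witt formula for $\mathcal L(G)$ and is, in particular, a non-negative integer. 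This is the same positivity that, via \cite[Proposition 1.4]{VV15}, lies behind the identity $(-1)^{l-|\bold J|}\mu(\hat 0,\bold J)=\mult(\bold J)\ge 0$ used to pass from Theorem~\ref{stan} to Corollary~\ref{imp}.

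Next I would use \eqref{eq:mdenom}: expanding the product side as $\prod_{\alpha}\sum_{r\ge 0}(-1)^r\binom{\dim\lie g_\alpha}{r}X^{r\,\htt\alpha}e^{-r\alpha}$, every coefficient that occurs is, up to an explicit sign, a product of binomial coefficients $\binom{\dim\lie g_\alpha}{r}$, hence a non-negative integer. By the recovery of $T(x,v)$ from $\prod_{i\ge 1}U(X_i)$ established in the paper, together with Proposition~\ref{T=X}, this exhibits each $e_\lambda^G(v)$ as a signed sum of such products. I would then invoke Stanley's transition formulas from \cite{S98} expressing $p_\lambda^G$ as an integral combination of the $e_\mu^G$, substitute the Lie-theoretic expressions, and collect terms by total multidegree. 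The $1/k$ denominators that would appear naively cancel, this being exactly the generalized Witt formula for $\mathcal L(G)$, and the alternating signs $(-1)^{\htt(\eta(\bold k))+|\bar{\bold J}|}$ cancel against the sign in Stanley's transition, exactly as in the passage between Theorem~\ref{stan} and Corollary~\ref{imp} and in Theorem~\ref{mainthm}. What remains is a polynomial in the $v_i$ whose coefficients are sums of products of the $\mult(\beta(J))$ with binomial coefficients formed from them, manifestly non-negative integers.

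The step I expect to be the main obstacle is the bookkeeping that makes these two cancellations exact: one must match Stanley's indexing of the $G$-symmetric functions (stable partitions, the variables $v_i$, partition type) with the Lie-theoretic indexing (elements $\bold J$ of the bond lattice $L_G$, heights $\htt\alpha$, the quantities $D(J,\bold J)$), and verify that no spurious rational coefficient survives — that the combination of multiplicities obtained really is the integral partially commutative Witt number rather than a genuine fraction. A secondary point requiring care is that the all-imaginary-simple-roots Borcherds algebra exists for an arbitrary finite graph $G$, which holds because any symmetric integer matrix with the prescribed off-diagonal zero pattern and diagonal entries $\le -1$ is a Borcherds--Cartan matrix in the sense of \cite{Ej96}; this makes the free partially commutative model, and hence the conclusion, available uniformly.
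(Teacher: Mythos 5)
Your setup coincides with the paper's starting point (the all--imaginary Borcherds algebra whose positive part is the free partially commutative Lie algebra on $G$, and the modified denominator identity \eqref{eq:mdenom}), but the crucial step --- positivity --- is not actually proved. You propose to express each $e_\lambda^G$ Lie--theoretically as a \emph{signed} sum of products of binomial coefficients via $\prod_i U(X_i)$, $T(x,v)$ and Proposition~\ref{T=X}, then ``invoke Stanley's transition formulas'' from $e$ to $p$ and assert that the alternating signs and the $1/k$ denominators cancel ``exactly as in the passage between Theorem~\ref{stan} and Corollary~\ref{imp}.'' That cancellation claim is precisely the content of Theorem~\ref{power} and cannot be borrowed from that passage: the $e\to p$ transition (the $G$--analogue of Newton's identities, i.e.\ the defining relation \eqref{e=p}) produces coefficients that are rational and of alternating sign, and the $\bold J$--indexed expansions you cite (Theorem~\ref{mainthm}, Corollary~\ref{imp}) concern the $p$--expansion of the chromatic symmetric function, whose coefficients genuinely do alternate in sign; nothing in that bookkeeping forces the coefficients of $p_n^G$ itself to be non-negative integers. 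The detour through the multivariable product $\prod_i U(X_i)$ and monomial symmetric functions is also unnecessary and obscures the one identity that does the work.

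The paper's proof is shorter and avoids any cancellation argument. With all simple roots imaginary the Weyl group is trivial, every pair is stable, so the single--variable modified denominator \eqref{free} gives
\begin{equation*}
\sum_{i\ge 0}(-X)^i e_i^G \;=\; U_1(-X)\;=\;\prod_{\alpha\in\Delta_+}\bigl(1-X^{\htt\alpha}e^{-\alpha}\bigr)^{\dim\lie g_\alpha}
\end{equation*}
(up to the sign substitution handled in the text). By \eqref{e=p}, $p_n^G$ is the coefficient of $X^n/n$ in $-\log$ of the left side; computing $-\log$ of the \emph{product} side term by term gives
$\sum_{\alpha\in\Delta_+}\dim\lie g_\alpha\sum_{k\ge1}\tfrac1k X^{k\,\htt\alpha}e^{-k\alpha}$,
a series with manifestly non-negative coefficients, and extracting the coefficient of $X^n/n$ yields
$p_n^G=\sum_{\htt\alpha=n}\bigl(\sum_{k\mid\alpha}\tfrac{n}{k}\,\dim\lie g_{\alpha/k}\bigr)e^{-\alpha}$,
where each $\tfrac{n}{k}\dim\lie g_{\alpha/k}=\htt(\alpha/k)\dim\lie g_{\alpha/k}$ is a non-negative integer; the case of general $\lambda$ follows by multiplicativity. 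No Möbius inversion, Witt--formula cancellation, or sign matching is needed --- this is the idea missing from your proposal, and without it (or an explicit verification of your claimed cancellations) the argument does not establish the theorem.
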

{\em Acknowledgements. The author is grateful to Sankaran Viswanath, Tanusree Khandai and R. Venkatesh for many helpful discussions and constant support.}

\section{Borcherds algebras and the modified denominator identity}\label{section2}
In this section, we give some definitions and results from the theory of Borcherds algebras (also called generalized Kac--Moody algebras) and prove the modified denominator identity (Proposition \ref{md}). Further details about the Borcherds algebras can be seen in  \cite{Bor88,Ej96,K90} and the references therein.
\subsection{Borcherds-Cartan matrix}\label{bcm}
 A real matrix  
$A=(a_{ij})_{i,j\in I}$ indexed by a finite or countably infinite set, which we identify with $I=\{1, \dots, n\}$ or $\mathbb{Z}_+$ respectively, is said to be a \textit{Borcherds--Cartan\ matrix}
if the following conditions are satisfied for all $i,j\in I$:
\begin{enumerate}
\item $A$ is symmetrizable
\item $a_{ii}=2$ or $a_{ii}\leq 0$
\item $a_{ij}\leq 0$ if $i\neq j$ and $a_{ij}\in\mathbb{Z}$ if $a_{ii}=2$
\item $a_{ij}=0$ if and only if $a_{ji}=0$.
\end{enumerate} 
Recall that a matrix $A$ is called symmetrizable if there exists a diagonal matrix $D=\mathrm{diag}(\epsilon_i,i\in I)$ with positive entries such that $DA$ is symmetric. Set $I^\text{re}=\{i\in I: a_{ii}=2\}$ and $I^{\text{im}}=I\backslash I^\text{re}$.  For example, the generalized Cartan matrices ($a_{ii} = 2$ for all $i \in I$ and $|I|<\infty$) defined in \cite[Chapter 1]{kac} is a Borcherds-Cartan matrix. If $A$ is the adjacency matrix of a graph $G$, then $-A$ is a Borcherds-Cartan matrix. For graph-theoretic terminologies, we follow \cite{diestel}.
\subsection{Borcherds algebra}\label{ba}
The Borcherds algebra $\lie g=\mathfrak{g}(A)$ associated with a Borcherds--Cartan matrix $A$ is the Lie algebra generated by $e_i, f_i, h_i$, $i\in I$ with the following defining relations:
\begin{enumerate}
 \item[(R1)] $[h_i, h_j]=0$ for $i,j\in I$
 \item[(R2)] $[h_i, e_k]=a_{ik}e_i$,  $[h_i, f_k]=-a_{ik}f_i$ for $i,k\in I$
 \item[(R3)] $[e_i, f_j]=\delta_{ij}h_i$ for $i, j\in I$
 \item[(R4)] $(\text{ad }e_i)^{1-a_{ij}}e_j=0$, $(\text{ad }f_i)^{1-a_{ij}}f_j=0$ if $i\in I^\text{re}$ and $i\neq j$
 \item[(R5)] $[e_i, e_j]=0$ and $[f_i, f_j]=0$ if $i, j \in I^\text{im}$ and $a_{ij}=0$.
\end{enumerate}

If $I = I^{\text{re}}$ and $|I|<\infty$ then $\lie g(A)$ is a Kac-Moody Lie algebra. In this case, for $i\in I$, the subalgebra spanned by the elements $h_i,e_i,f_i$ is isomorphic to $\mathfrak{sl}_2$ (possibly after rescaling $e_i$ and $h_i$). In case of Borcherds algebras, if $a_{ii}=0$, the subalgebra spanned by the elements $h_i,e_i,f_i$ is a Heisenberg algebra. This is one of the main structural difference between these algebras. See \cite[Proposition 1.5]{Ej96} for more details.

\subsection{The graph of a Borcherds algebra}\label{graph}
For a given Borcherds algebra $\lie g(A)$ we associate a graph $G$ as follows: $G$ has vertex set $I$, with an edge between two vertices $i$ and $j$ iff 
$a_{ij}\neq 0$ for $i,j\in I$, $i\neq j$. The edge set of $G$ is denoted by $E(G)$ and $(i, j)$ denotes the edge between the nodes $i$ and $j$. 
Note that $G$ is a simple (finite or countably infinite) graph which we call the $\mathrm{graph \ of} \ \lie g$. Conversely, given a graph $G$ (finite/countably infinite) there exists a Borcherds algebra $\lie g$ whose associated graph is $G$. One way of doing this is by considering the negative of the adjacency matrix of $G$ which is a Borcherds-Cartan matrix. This way we can construct a Borcherds algebra from the graph $G$ which we will refer as the Borcherds algebra associated with the graph $G$.

Let $\bold k = (k_i: i \in I) \in \mathbb Z^{I}$ be such that $k_i$ is non-zero only for finitely many elements of $I$. Define $\supp \bold k = \{i \in I : k_i \ne 0\}$. In this paper, we will be always working with $\bold k$ satisfying this finite support condition. Even though the graph $G$ can be infinite, we will be working with the subgraph generated by $\supp \bold k$ in $G$ which is finite. 

\subsection{Root space and triangular decomposition}\label{Root space and traigular decompositions} 
Define a $\mathbb{Z}^{I}$--grading on $\lie g$ by giving $h_i$ degree $(0,0,\dots)$, $e_i$ degree $(0,\dots,0,1,0,\dots)$ and $f_i$ degree $(0,\dots,0,-1,0,\dots)$ where $\pm 1$ appears at the $i$--th position. For a sequence $(n_1,n_2,\dots)$, we denote by $\lie g(n_1,n_2,\dots)$ the corresponding graded piece; note that $\lie g(n_1,n_2,\dots)=0$ unless finitely many of the $n_i$ are non--zero. Let $\mathfrak{h}$ be the abelian subalgebra spanned by the $h_i$, $i\in I$ and let $\mathfrak{E}$ the space of commuting derivations of $\lie g$ spanned by the $D_i$, $i\in I$, where $D_i$ denotes the derivation that acts on $\lie g(n_1,n_2,\dots)$ as multiplication by the scalar $n_i$. Note that the abelian subalgebra $\mathfrak{E}\ltimes \mathfrak{h}$ of $\mathfrak{E}\ltimes \mathfrak{g}$ acts by scalars on $\lie g(n_1,n_2,\dots)$ and we have a root space decomposition:
\begin{equation}\label{rootdec}\mathfrak{g}=\bigoplus _{\alpha \in (\mathfrak{E}\ltimes \mathfrak{h})^*}
\mathfrak{g}_{\alpha }, \ \mathrm{where} \ \mathfrak{g}_{\alpha }
:=\{ x\in \mathfrak{g}\ |\ [h, x]=\alpha(h) x \ 
\mathrm{for\ all}\ h\in \mathfrak{E}\ltimes \mathfrak{h} \}.\end{equation}
Define $\Pi=\{\alpha_i\}_{i\in I}\subset (\mathfrak{E}\ltimes\lie h)^{*}$ by $\alpha_j((D_k,h_i))=\delta_{k,j}+a_{ij}$ and set
$$Q:=\bigoplus _{i\in I}\mathbb{Z}\alpha _i,\ \ Q_+ :=\sum _{i\in I}\mathbb{Z}_{+}\alpha _i.$$
Denote by $\Delta :=\{ \alpha \in (\mathfrak{E}\ltimes\lie h)^*\backslash \{0\} \mid \mathfrak{g}_{\alpha }\neq 0\}$ the set of roots, and by $\Delta_+$ the set of roots which are non--negative integral linear combinations of the $\alpha_i^{'}s$, called the positive roots. The elements in $\Pi$ are called the simple roots; we call $\Pi^\mathrm{re}:=\{\alpha_i: i\in I^\mathrm{re}\}$ the set of real simple roots and $\Pi^\mathrm{im}=\Pi\backslash \Pi^\mathrm{re}$ the set of imaginary simple roots. We have $\Delta =\Delta_+ \sqcup - \Delta_+$ and 
$$\mathfrak{g}_0=\mathfrak{h},\ \ \lie g_\alpha=\lie g(n_1,n_2,\dots),\ \text{ if }\ \alpha=\sum_{i\in I} n_i\alpha_i\in \Delta.$$

Moreover, we have a triangular decomposition
$$\lie g\cong \lie n^{-}\oplus \lie h \oplus \lie n^+,$$
where $\lie n^{+}$ (resp. $\lie n^{-}$) is the free Lie algebra generated by $e_i,\ i\in I$ (resp. $f_i,\ i\in I$) with defining relations 
$$(\text{ad }e_i)^{1-a_{ij}}e_j=0\ (\text{resp. } (\text{ad }f_i)^{1-a_{ij}}f_j=0) \text{ for } i\in I^\text{re}, \ j\in I \text{ and } i\neq j$$
and
$$[e_i, e_j]=0\ (\text{resp. } [f_i, f_j]=0) \text{ for } i, j \in I^\text{im} \text{ and } a_{ij}=0.$$

Given \eqref{rootdec} we have
$$\lie n^{\pm}=\bigoplus_{\alpha \in \pm\Delta_{+}}
\mathfrak{g}_{\alpha}.$$
Finally, given $\gamma=\sum_{i\in I}n_i\alpha_i\in Q_+$ (only finitely many $n_i$ are non--zero), we set $\text{ht}(\gamma)=\sum\limits_{i\in I}n_i.$

 

\subsection{The Weyl group and the Weyl vector}
We denote by $R=Q\otimes_{\bz} \mathbb{R}$ the real vector space spanned by $\Delta$. There exists a symmetric bilinear form on $R$ given by $(\alpha_i, \alpha_j)=\epsilon_i a_{ij}$ for $i, j\in I.$ For $i\in I^{\text{re}}$, define the linear isomorphism $\bold{s}_i$ of $R$ by 
$$\bold{s}_i(\lambda)=\lambda-\lambda(h_i)\alpha_i=\lambda-2\frac{(\lambda,\alpha_i)}{(\alpha_i,\alpha_i)}\alpha_i,\ \ \lambda\in R.$$ The Weyl group $W$ of $\mathfrak{g}$ is the subgroup of $\mathrm{GL}(R)$ generated by the simple reflections $\bold{s}_i$, $i\in I^\mathrm{re}$. Note that $W$ is a Coxeter group with canonical generators $\bold{s}_i, i\in I^\mathrm{re}$ and the above bilinear form is $W$--invariant. We denote by $\ell(w)=\mathrm{min}\{k\in \mathbb{N}: w=\bold{s}_{i_1}\cdots \bold{s}_{i_k}\}$ the length of $w\in W$ and $w=\bold{s}_{i_1}\cdots \bold{s}_{i_k}$ with $k=\ell(w)$ is called a reduced expression. We denote by $\Delta^\mathrm{re}=W(\Pi^\mathrm{re})$ the set of real roots and $\Delta^\mathrm{im}=\Delta\backslash \Delta^\mathrm{re}$ the set of imaginary roots. Equivalently, a root $\alpha$ is imaginary if and only if $(\alpha, \alpha)\le 0$ and else real. We can extend $(.,.)$ to a symmetric form on $(\mathfrak{E}\ltimes \lie h)^*$ satisfying $(\lambda,\alpha_i)=\lambda(\epsilon_ih_i)$ and also $\bold{s}_i$ to a linear isomorphism of $(\mathfrak{E}\ltimes \lie h)^*$ by 
$$\bold{s}_i(\lambda)=\lambda-\lambda(h_i)\alpha_i,\ \ \lambda\in (\mathfrak{E}\ltimes \lie h)^*.$$
Let $\rho$ be any element of $(\mathfrak{E}\ltimes \lie h)^*$ satisfying $2(\rho,\alpha_i)=(\alpha_i,\alpha_i)$ for all $i\in I$. Note that the Weyl vector $\rho$ of $\lie g$ need not be unique. The following lemma from \cite[Lemma 2.3]{akv} will be helpful on multiple occasions.
\begin{lem}\label{simplem}
	Let $i \in I^{im}$ and $\alpha \in \Delta_+ \backslash \{\alpha_i\}$ such that $\alpha(h_i)<0$. Then $\alpha + j \alpha_i \in \Delta_+$ for all $j \in \mathbb Z_+$.
\end{lem}

\subsection{The modified Weyl denominator identity} \label{mdeno}
The following Weyl denominator identity has been proved in \cite{Bor88}. See also \cite[Theorem 3.16]{Ej96}.
\begin{eqnarray}\label{denominator}
U:=  \sum_{w \in W } (-1)^{\ell(w)} \sum_{ \gamma \in
\Omega} (-1)^{\htt(\gamma)} e^{w(\rho -\gamma)-\rho } & =& \prod_{\alpha \in \Delta_+} (1 - e^{-\alpha})^{\dim \mathfrak{g}_{\alpha}} 
\label{eq:denom} \end{eqnarray}  
where $\Omega$ is the set of all $\gamma\in Q_+$ such that $\gamma$ is a finite sum of mutually orthogonal distinct imaginary simple roots. Note that $0\in \Omega$ and $\alpha\in \Omega,$ if $\alpha$ is an imaginary simple root. 

The following lemma from \cite[Lemma 3.6]{akv} will be helpful to do the calculations with the sum side of the denominator identity.
\begin{lem}\label{helplem}
	Let $w\in W$ and $\gamma\in \Omega$. We write $\rho-w(\rho)+w(\gamma)=\sum_{\alpha\in\Pi} b_{\alpha}(w,\gamma)\alpha$. Then we have
	\begin{enumerate}
		\item[(i)] $b_{\alpha}(w,\gamma)\in \mathbb{Z}_{+}$ for all $\alpha\in \Pi$ and $b_{\alpha}(w,\gamma)= 0$ if $\alpha\notin I(w)\cup I(\gamma)$,
		\item[(ii)] $I(w)=\{\alpha\in \Pi^{\mathrm re} : b_{\alpha}(w,\gamma)\geq 1\}$ and  $b_{\alpha}(w,\gamma)=1$ if $\alpha\in I(\gamma)$, and
		\item[(iii)] If $I(w) \cup I(\gamma)$ is independent if, and only if, $b_{\alpha}(w,\gamma)=1$ for all $\alpha\in I(w)\cup I(\gamma)$.
	
	\end{enumerate}
\end{lem}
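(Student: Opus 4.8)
The plan is to split the weight $\rho-w(\rho)+w(\gamma)$ appearing in $U$ into three pieces, each lying in $Q_+$ with controlled support, and then read off (i)--(iv) from their coefficients. Write
$$\rho-w(\rho)+w(\gamma)=A+\gamma+C,\qquad A:=\rho-w(\rho),\quad C:=w(\gamma)-\gamma.$$
Fix a reduced expression $w=\bold{s}_{i_1}\cdots\bold{s}_{i_k}$. The standard telescoping computation gives $A=\sum_{l=1}^{k}\beta_l$ with $\beta_l=\bold{s}_{i_1}\cdots\bold{s}_{i_{l-1}}(\alpha_{i_l})\in\Delta^{\mathrm{re}}_+$; hence $A\in Q_+$, $\supp(A)\subseteq I(w)$, and $\htt(A)=\sum_l\htt(\beta_l)\ge k=\ell(w)$, with equality exactly when every $\beta_l$ is a simple root. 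Taking, for a given $i\in I(w)$, the minimal $l$ with $i_l=i$, one sees the $\alpha_i$-coefficient of $\beta_l$ is $1$, so the $\alpha_i$-coefficient of $A$ is at least $1$ for every $i\in I(w)$. For $C$: since $\gamma=\sum_{\alpha\in I(\gamma)}\alpha$ is a sum of distinct imaginary simple roots and each $\bold{s}_i$ ($i\in I^{\mathrm{re}}$) sends $\alpha_j$ ($j\in I^{\mathrm{im}}$) to $\alpha_j+|a_{ij}|\alpha_i$, the vector $w(\alpha_j)$ differs from $\alpha_j$ only in the real-simple-root coordinates; being a root with $\alpha_j$-coordinate $1$, it lies in $\Delta_+\subseteq Q_+$, so $w(\alpha_j)-\alpha_j\in\sum_{i\in I(w)}\mathbb{Z}_+\alpha_i$, whence $C\in\sum_{i\in I(w)}\mathbb{Z}_+\alpha_i$.

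From this decomposition, (i) is immediate: $b_\alpha(w,\gamma)\in\mathbb{Z}_+$ and vanishes outside $\supp(A)\cup\supp(\gamma)\cup\supp(C)\subseteq I(w)\cup I(\gamma)$. For (ii), $\gamma$ contributes nothing to real coordinates, so for $i\in I^{\mathrm{re}}$ the coefficient $b_{\alpha_i}(w,\gamma)$ is the $\alpha_i$-coefficient of $A+C$, which is $\ge1$ for $i\in I(w)$ and $0$ for $i\in I^{\mathrm{re}}\setminus I(w)$; and for $\alpha_j\in I(\gamma)$ only $\gamma$ contributes to the $\alpha_j$-coordinate, giving $b_{\alpha_j}(w,\gamma)=1$. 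For (iii), if $I(w)\cup I(\gamma)$ is independent, the generators $\bold{s}_i$ ($i\in I(w)$) pairwise commute and each fixes every $\alpha_j\in I(\gamma)$; hence $w=\prod_{i\in I(w)}\bold{s}_i$, giving $A=\sum_{i\in I(w)}\alpha_i$ and $C=0$, so $b_\alpha(w,\gamma)=1$ for all $\alpha\in I(w)\cup I(\gamma)$.

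Part (iv) is the substantive case; I would argue its contrapositive: assume $b_\alpha(w,\gamma)\le1$ for every $\alpha\in\Pi^{\mathrm{re}}$ and deduce that $I(w)\cup I(\gamma)$ is independent. By (ii) this forces $b_{\alpha_i}(w,\gamma)=1$ for each $i\in I(w)$ and $0$ otherwise, so summing over real simple roots yields $\htt(A)+\htt(C)=|I(w)|$; together with $\htt(A)\ge\ell(w)\ge|I(w)|$ and $\htt(C)\ge0$, this forces $\htt(A)=\ell(w)=|I(w)|$ and $C=0$. Now $\htt(A)=\ell(w)$ means every $\beta_l$ is a simple root; the $\beta_l$ being distinct, lying in $\Delta^{\mathrm{re}}_+$ with support in $I(w)$, and numbering $|I(w)|$, the inversion set $\{\beta_1,\dots,\beta_k\}=\Delta^{\mathrm{re}}_+\cap w(\Delta^{\mathrm{re}}_-)$ equals $\{\alpha_i:i\in I(w)\}$. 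Since this set is closed under addition of roots, no two $i,i'\in I(w)$ can be adjacent, for otherwise $\alpha_i+\alpha_{i'}$ — a positive real root since $\{i,i'\}$ spans a connected (edge) subgraph — would have to lie in it, contradicting that it consists of simple roots; thus $I(w)$ is independent and $w=\prod_{i\in I(w)}\bold{s}_i$. Finally $C=0$ together with $w(\alpha_j)-\alpha_j=\sum_{i\in I(w)}|a_{ij}|\alpha_i$ forces $a_{ij}=0$ for all $i\in I(w)$ and $\alpha_j\in I(\gamma)$; and $I(\gamma)$ is automatically independent since its elements are mutually orthogonal and hence non-adjacent. Hence $I(w)\cup I(\gamma)$ is independent, proving (iv). The one genuinely delicate step — the main obstacle — is this identification of the inversion set with $\{\alpha_i:i\in I(w)\}$ and the use of its closure under root-valued sums; everything else is bookkeeping with the three-term decomposition.
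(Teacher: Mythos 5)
You should note first that the paper itself contains no proof of this lemma to compare against: it is quoted verbatim from \cite[Lemma 3.6]{akv}, so your argument has to be judged on its own. On that basis, your reconstruction via the decomposition $\rho-w(\rho)+w(\gamma)=A+\gamma+C$ with $A=\rho-w(\rho)=\sum_l\beta_l$ and $C=w(\gamma)-\gamma$ is sound, and parts (i)--(iii) are complete: the telescoping formula for $A$, the observation that the $\alpha_i$-coefficient of $A$ is at least $1$ for each $i\in I(w)$, the positivity of $w(\alpha_j)-\alpha_j$ supported on $I(w)$, and the commuting-reflections computation in the independent case are all correct (and the fact that a reduced word in pairwise commuting generators uses each letter exactly once, so $w=\prod_{i\in I(w)}\mathbf{s}_i$, is the right justification). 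Your contrapositive argument for (iv), forcing $\mathrm{ht}(A)=\ell(w)=|I(w)|$ and $C=0$ and then identifying the inversion set $\Phi(w)=\{\beta\in\Delta_+: w^{-1}\beta\in\Delta_-\}=\{\beta_1,\dots,\beta_k\}$ with $\{\alpha_i: i\in I(w)\}$, is also the right mechanism.

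The one claim you should repair is the parenthetical assertion that for adjacent $i,i'\in I(w)$ the sum $\alpha_i+\alpha_{i'}$ is a positive \emph{real} root. It is always a positive root (the $\alpha_i$-string through $\alpha_{i'}$ is unbroken since $a_{ii'}\le -1$), but it can be imaginary: for instance if $a_{ii'}=a_{i'i}=-3$ then $(\alpha_i+\alpha_{i'},\alpha_i+\alpha_{i'})<0$. This does not sink the argument, but the contradiction must be split into two cases rather than invoked through membership in $\Phi(w)$ alone: since $\alpha_i,\alpha_{i'}\in\Phi(w)$, the element $w^{-1}(\alpha_i+\alpha_{i'})$ is a root lying in $-Q_+$, hence a negative root; if $\alpha_i+\alpha_{i'}$ were imaginary this already contradicts the $W$-invariance of the set of positive imaginary roots, and if it is real it lies in $\Phi(w)$, contradicting that $\Phi(w)$ consists of simple roots. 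Phrased this way, the ``closed under addition'' property you invoke is exactly what gets proved, and it should be stated for $\Phi(w)$ as the set of \emph{all} positive roots sent to negative roots (which automatically consists of real roots), not for $\Delta_+^{\mathrm{re}}\cap w(\Delta_-^{\mathrm{re}})$ a priori. With that adjustment, and keeping the order of deductions in the last step (first $I(w)$ independent and $w=\prod_{i\in I(w)}\mathbf{s}_i$, then $C=0$ forces $a_{ij}=0$ for $i\in I(w)$, $\alpha_j\in I(\gamma)$), your proof is complete.
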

	In the following proposition, we state and prove the modified denominator identity.
\begin{prop}\label{md}
Let $X$ be an indeterminate. Then we have
	\begin{eqnarray}\label{mdenom}
	U(X):=  \sum_{w \in W } (-1)^{\ell(w)} \sum_{ \gamma \in
		\Omega} (-1)^{\htt(\gamma)} X^{-\htt(w(\rho -\gamma)-\rho)} e^{w(\rho -\gamma)-\rho } & =& \prod_{\alpha \in \Delta_+} (1 - X^{-\htt(\alpha)}e^{-\alpha})^{\dim \mathfrak{g}_{\alpha}} 
	\label{eq:mdenom} \end{eqnarray}  
\end{prop}
\begin{pf}
Let $X_{\alpha} = e^{-\alpha}$. Then with the notations as in Lemma \ref{helplem}, we have $$e^{w(\rho-\gamma)-\rho} = \prod_{\alpha \in \pi}e^{b_{\alpha}(w,\gamma)\alpha} = \prod_{\alpha \in I(w) \cup I(\gamma)}X_{\alpha}^{b_{\alpha}(w,\gamma)} \in \mathbb C[X_\alpha : \alpha \in I].$$ This shows that $U=  \sum_{w \in W } (-1)^{\ell(w)} \sum_{ \gamma \in	\Omega} (-1)^{\htt(\gamma)} e^{w(\rho -\gamma)-\rho } \in \mathbb C[[X_\alpha : \alpha \in I]]$. Now, $$X^{-\htt(w(\rho -\gamma)-\rho)} e^{w(\rho -\gamma)-\rho } = \prod_{\alpha \in \pi}(XX_{\alpha})^{b_{\alpha}(w,\gamma)} \in \mathbb \mathbb C[X,X_\alpha : \alpha \in I].$$
This shows that the sum side of Equation \eqref{mdenom} can be obtained from the sum side of Equation \eqref{denominator} by a simple change of variable. Since in the product side of Equation \eqref{mdenom} also the same change of variable is applied, we get the required result from Equation \ref{denominator}.
\end{pf}

Next, we define the weighted bond lattice of the graph $G$.

\begin{defn}\label{bond}Let $L_{G}(\bold k)$ be the weighted bond lattice of $G$, which is the set of 
	$\mathbf{J}=\{J_1,\dots,J_k\}$  satisfying the following properties:
	\begin{enumerate}
		\item[(i)] $\bold J$ is a multiset, i.e. we allow $J_i=J_j$ for $i\neq j$
		
		\item[(ii)] each $J_i$ is a multiset and the subgraph spanned by the underlying set of $J_i$ is a connected subgraph of $G$ for each $1\le i\le k$ and
		
		\item[(iii)] the disjoint union $J_1\dot{\cup} \cdots \dot{\cup} J_k=\{\underbrace{\alpha_i,\dots, \alpha_i}_{k_i\, \mathrm{times}}: i\in I\}$.
		
	\end{enumerate}

\end{defn}
	When $\bold k = \bold 1 := (1,1,\dots)$ we denote $L_G(\bold k)$ simply by $L_G$. For $\mathbf{J}\in L_{G}(\bold k)$ we denote by $D(J_i,\mathbf{J})$ the number of times $J_i$ appear in $\mathbf{J}$.

\begin{example}
The Hasse diagram of the the bond lattice of weight $\bold k = (2,1,2)$ of the path graph on three vertices $\{0,1,2\}$ is given below [c.f. Example \ref{multi}].	
\begin{center}

	\tikzset{every picture/.style={line width=0.75pt}} 
	
	\begin{tikzpicture}[x=0.75pt,y=0.75pt,yscale=-1,xscale=1]
	
	\draw    (330.5,42) -- (240.34,80.22) ;
	\draw [shift={(238.5,81)}, rotate = 337.03] [color={rgb, 255:red, 0; green, 0; blue, 0 }  ][line width=0.75]    (10.93,-3.29) .. controls (6.95,-1.4) and (3.31,-0.3) .. (0,0) .. controls (3.31,0.3) and (6.95,1.4) .. (10.93,3.29)   ;
	\draw    (330.5,42) -- (308.48,81.26) ;
	\draw [shift={(307.5,83)}, rotate = 299.29] [color={rgb, 255:red, 0; green, 0; blue, 0 }  ][line width=0.75]    (10.93,-3.29) .. controls (6.95,-1.4) and (3.31,-0.3) .. (0,0) .. controls (3.31,0.3) and (6.95,1.4) .. (10.93,3.29)   ;
	\draw    (330.5,42) -- (358.33,80.38) ;
	\draw [shift={(359.5,82)}, rotate = 234.06] [color={rgb, 255:red, 0; green, 0; blue, 0 }  ][line width=0.75]    (10.93,-3.29) .. controls (6.95,-1.4) and (3.31,-0.3) .. (0,0) .. controls (3.31,0.3) and (6.95,1.4) .. (10.93,3.29)   ;
	\draw    (330.5,42) -- (428.64,80.27) ;
	\draw [shift={(430.5,81)}, rotate = 201.31] [color={rgb, 255:red, 0; green, 0; blue, 0 }  ][line width=0.75]    (10.93,-3.29) .. controls (6.95,-1.4) and (3.31,-0.3) .. (0,0) .. controls (3.31,0.3) and (6.95,1.4) .. (10.93,3.29)   ;
	\draw    (233.5,103) -- (233.5,157) ;
	\draw [shift={(233.5,159)}, rotate = 270] [color={rgb, 255:red, 0; green, 0; blue, 0 }  ][line width=0.75]    (10.93,-3.29) .. controls (6.95,-1.4) and (3.31,-0.3) .. (0,0) .. controls (3.31,0.3) and (6.95,1.4) .. (10.93,3.29)   ;
	\draw    (233.5,103) -- (164.08,156.78) ;
	\draw [shift={(162.5,158)}, rotate = 322.24] [color={rgb, 255:red, 0; green, 0; blue, 0 }  ][line width=0.75]    (10.93,-3.29) .. controls (6.95,-1.4) and (3.31,-0.3) .. (0,0) .. controls (3.31,0.3) and (6.95,1.4) .. (10.93,3.29)   ;
	\draw    (303.5,104) -- (304.46,154) ;
	\draw [shift={(304.5,156)}, rotate = 268.9] [color={rgb, 255:red, 0; green, 0; blue, 0 }  ][line width=0.75]    (10.93,-3.29) .. controls (6.95,-1.4) and (3.31,-0.3) .. (0,0) .. controls (3.31,0.3) and (6.95,1.4) .. (10.93,3.29)   ;
	\draw    (370.5,102) -- (370.5,155) ;
	\draw [shift={(370.5,157)}, rotate = 270] [color={rgb, 255:red, 0; green, 0; blue, 0 }  ][line width=0.75]    (10.93,-3.29) .. controls (6.95,-1.4) and (3.31,-0.3) .. (0,0) .. controls (3.31,0.3) and (6.95,1.4) .. (10.93,3.29)   ;
	\draw    (429,104) -- (429.48,158) ;
	\draw [shift={(429.5,160)}, rotate = 269.49] [color={rgb, 255:red, 0; green, 0; blue, 0 }  ][line width=0.75]    (10.93,-3.29) .. controls (6.95,-1.4) and (3.31,-0.3) .. (0,0) .. controls (3.31,0.3) and (6.95,1.4) .. (10.93,3.29)   ;
	\draw    (429,104) -- (494.94,156.75) ;
	\draw [shift={(496.5,158)}, rotate = 218.66] [color={rgb, 255:red, 0; green, 0; blue, 0 }  ][line width=0.75]    (10.93,-3.29) .. controls (6.95,-1.4) and (3.31,-0.3) .. (0,0) .. controls (3.31,0.3) and (6.95,1.4) .. (10.93,3.29)   ;
	\draw    (429,104) -- (235.43,158.46) ;
	\draw [shift={(233.5,159)}, rotate = 344.28999999999996] [color={rgb, 255:red, 0; green, 0; blue, 0 }  ][line width=0.75]    (10.93,-3.29) .. controls (6.95,-1.4) and (3.31,-0.3) .. (0,0) .. controls (3.31,0.3) and (6.95,1.4) .. (10.93,3.29)   ;
	\draw    (370.5,102) -- (306.05,154.73) ;
	\draw [shift={(304.5,156)}, rotate = 320.71000000000004] [color={rgb, 255:red, 0; green, 0; blue, 0 }  ][line width=0.75]    (10.93,-3.29) .. controls (6.95,-1.4) and (3.31,-0.3) .. (0,0) .. controls (3.31,0.3) and (6.95,1.4) .. (10.93,3.29)   ;
	\draw    (233.5,103) -- (368.64,156.27) ;
	\draw [shift={(370.5,157)}, rotate = 201.51] [color={rgb, 255:red, 0; green, 0; blue, 0 }  ][line width=0.75]    (10.93,-3.29) .. controls (6.95,-1.4) and (3.31,-0.3) .. (0,0) .. controls (3.31,0.3) and (6.95,1.4) .. (10.93,3.29)   ;
	\draw    (370.5,102) -- (428.07,158.6) ;
	\draw [shift={(429.5,160)}, rotate = 224.51] [color={rgb, 255:red, 0; green, 0; blue, 0 }  ][line width=0.75]    (10.93,-3.29) .. controls (6.95,-1.4) and (3.31,-0.3) .. (0,0) .. controls (3.31,0.3) and (6.95,1.4) .. (10.93,3.29)   ;
	\draw    (303.5,104) -- (494.57,157.46) ;
	\draw [shift={(496.5,158)}, rotate = 195.63] [color={rgb, 255:red, 0; green, 0; blue, 0 }  ][line width=0.75]    (10.93,-3.29) .. controls (6.95,-1.4) and (3.31,-0.3) .. (0,0) .. controls (3.31,0.3) and (6.95,1.4) .. (10.93,3.29)   ;
	\draw    (160.5,175) -- (303.63,228.3) ;
	\draw [shift={(305.5,229)}, rotate = 200.43] [color={rgb, 255:red, 0; green, 0; blue, 0 }  ][line width=0.75]    (10.93,-3.29) .. controls (6.95,-1.4) and (3.31,-0.3) .. (0,0) .. controls (3.31,0.3) and (6.95,1.4) .. (10.93,3.29)   ;
	\draw    (303.5,175) -- (304.46,228) ;
	\draw [shift={(304.5,230)}, rotate = 268.96] [color={rgb, 255:red, 0; green, 0; blue, 0 }  ][line width=0.75]    (10.93,-3.29) .. controls (6.95,-1.4) and (3.31,-0.3) .. (0,0) .. controls (3.31,0.3) and (6.95,1.4) .. (10.93,3.29)   ;
	\draw    (430.5,174) -- (370.46,230.63) ;
	\draw [shift={(369,232)}, rotate = 316.68] [color={rgb, 255:red, 0; green, 0; blue, 0 }  ][line width=0.75]    (10.93,-3.29) .. controls (6.95,-1.4) and (3.31,-0.3) .. (0,0) .. controls (3.31,0.3) and (6.95,1.4) .. (10.93,3.29)   ;
	\draw    (369.5,175) -- (429.98,226.7) ;
	\draw [shift={(431.5,228)}, rotate = 220.53] [color={rgb, 255:red, 0; green, 0; blue, 0 }  ][line width=0.75]    (10.93,-3.29) .. controls (6.95,-1.4) and (3.31,-0.3) .. (0,0) .. controls (3.31,0.3) and (6.95,1.4) .. (10.93,3.29)   ;
	\draw    (499.5,175) -- (431.11,225.81) ;
	\draw [shift={(429.5,227)}, rotate = 323.39] [color={rgb, 255:red, 0; green, 0; blue, 0 }  ][line width=0.75]    (10.93,-3.29) .. controls (6.95,-1.4) and (3.31,-0.3) .. (0,0) .. controls (3.31,0.3) and (6.95,1.4) .. (10.93,3.29)   ;
	\draw    (430.5,174) -- (429.54,226) ;
	\draw [shift={(429.5,228)}, rotate = 271.06] [color={rgb, 255:red, 0; green, 0; blue, 0 }  ][line width=0.75]    (10.93,-3.29) .. controls (6.95,-1.4) and (3.31,-0.3) .. (0,0) .. controls (3.31,0.3) and (6.95,1.4) .. (10.93,3.29)   ;
	\draw    (369.5,175) -- (307.03,227.71) ;
	\draw [shift={(305.5,229)}, rotate = 319.84000000000003] [color={rgb, 255:red, 0; green, 0; blue, 0 }  ][line width=0.75]    (10.93,-3.29) .. controls (6.95,-1.4) and (3.31,-0.3) .. (0,0) .. controls (3.31,0.3) and (6.95,1.4) .. (10.93,3.29)   ;
	\draw    (234.5,173) -- (367.17,231.2) ;
	\draw [shift={(369,232)}, rotate = 203.69] [color={rgb, 255:red, 0; green, 0; blue, 0 }  ][line width=0.75]    (10.93,-3.29) .. controls (6.95,-1.4) and (3.31,-0.3) .. (0,0) .. controls (3.31,0.3) and (6.95,1.4) .. (10.93,3.29)   ;
	\draw    (369.5,175) -- (369.02,230) ;
	\draw [shift={(369,232)}, rotate = 270.5] [color={rgb, 255:red, 0; green, 0; blue, 0 }  ][line width=0.75]    (10.93,-3.29) .. controls (6.95,-1.4) and (3.31,-0.3) .. (0,0) .. controls (3.31,0.3) and (6.95,1.4) .. (10.93,3.29)   ;
	\draw    (160.5,175) -- (232.92,230.78) ;
	\draw [shift={(234.5,232)}, rotate = 217.61] [color={rgb, 255:red, 0; green, 0; blue, 0 }  ][line width=0.75]    (10.93,-3.29) .. controls (6.95,-1.4) and (3.31,-0.3) .. (0,0) .. controls (3.31,0.3) and (6.95,1.4) .. (10.93,3.29)   ;
	\draw    (234.5,173) -- (234.5,230) ;
	\draw [shift={(234.5,232)}, rotate = 270] [color={rgb, 255:red, 0; green, 0; blue, 0 }  ][line width=0.75]    (10.93,-3.29) .. controls (6.95,-1.4) and (3.31,-0.3) .. (0,0) .. controls (3.31,0.3) and (6.95,1.4) .. (10.93,3.29)   ;
	\draw    (236.5,250) -- (325.72,296.08) ;
	\draw [shift={(327.5,297)}, rotate = 207.32] [color={rgb, 255:red, 0; green, 0; blue, 0 }  ][line width=0.75]    (10.93,-3.29) .. controls (6.95,-1.4) and (3.31,-0.3) .. (0,0) .. controls (3.31,0.3) and (6.95,1.4) .. (10.93,3.29)   ;
	\draw    (307.5,249) -- (326.73,295.15) ;
	\draw [shift={(327.5,297)}, rotate = 247.38] [color={rgb, 255:red, 0; green, 0; blue, 0 }  ][line width=0.75]    (10.93,-3.29) .. controls (6.95,-1.4) and (3.31,-0.3) .. (0,0) .. controls (3.31,0.3) and (6.95,1.4) .. (10.93,3.29)   ;
	\draw    (371.5,249) -- (328.85,295.53) ;
	\draw [shift={(327.5,297)}, rotate = 312.51] [color={rgb, 255:red, 0; green, 0; blue, 0 }  ][line width=0.75]    (10.93,-3.29) .. controls (6.95,-1.4) and (3.31,-0.3) .. (0,0) .. controls (3.31,0.3) and (6.95,1.4) .. (10.93,3.29)   ;
	\draw    (429.5,249) -- (329.31,296.15) ;
	\draw [shift={(327.5,297)}, rotate = 334.8] [color={rgb, 255:red, 0; green, 0; blue, 0 }  ][line width=0.75]    (10.93,-3.29) .. controls (6.95,-1.4) and (3.31,-0.3) .. (0,0) .. controls (3.31,0.3) and (6.95,1.4) .. (10.93,3.29)   ;
	\draw    (223.5,82) -- (223.5,100) ;
	\draw    (347.5,297) -- (347.5,315) ;
	\draw    (334.5,297) -- (334.5,315) ;
	\draw    (362.5,82) -- (362.5,100) ;
	\draw    (315.5,82) -- (315.5,100) ;
	\draw    (242.5,156) -- (242.5,174) ;
	\draw    (222.5,156) -- (222.5,174) ;
	\draw    (175.5,156) -- (175.5,174) ;
	\draw    (145.5,156) -- (145.5,174) ;
	\draw    (433.5,82) -- (433.5,100) ;
	\draw    (241.5,230) -- (241.5,248) ;
	\draw    (222.5,230) -- (222.5,248) ;
	\draw    (519.5,157) -- (519.5,175) ;
	\draw    (507.5,157) -- (507.5,175) ;
	\draw    (435.5,156) -- (435.5,174) ;
	\draw    (425.5,155) -- (425.5,173) ;
	\draw    (365.5,156) -- (365.5,174) ;
	\draw    (352.5,156) -- (352.5,174) ;
	\draw    (319.5,156) -- (319.5,174) ;
	\draw    (299.5,156) -- (299.5,174) ;
	\draw    (320.5,297) -- (320.5,315) ;
	\draw    (307.5,297) -- (307.5,315) ;
	\draw    (448.5,231) -- (448.5,249) ;
	\draw    (437.5,231) -- (437.5,249) ;
	\draw    (424.5,230) -- (424.5,248) ;
	\draw    (378.5,231) -- (378.5,249) ;
	\draw    (365.5,231) -- (365.5,249) ;
	\draw    (353.5,230) -- (353.5,248) ;
	\draw    (323.5,230) -- (323.5,248) ;
	\draw    (302.5,230) -- (302.5,248) ;
	\draw    (289.5,230) -- (289.5,248) ;
	\draw    (252.5,230) -- (252.5,248) ;
	\draw    (303.5,104) -- (164.37,157.28) ;
	\draw [shift={(162.5,158)}, rotate = 339.03999999999996] [color={rgb, 255:red, 0; green, 0; blue, 0 }  ][line width=0.75]    (10.93,-3.29) .. controls (6.95,-1.4) and (3.31,-0.3) .. (0,0) .. controls (3.31,0.3) and (6.95,1.4) .. (10.93,3.29)   ;
	
	\draw (340,160) node [anchor=north west][inner sep=0.75pt]   [align=left] {0 0 122};
	\draw (279,160) node [anchor=north west][inner sep=0.75pt]   [align=left] {00 12 2};
	\draw (479,160) node [anchor=north west][inner sep=0.75pt]   [align=left] {001 2 2};
	\draw (403,160) node [anchor=north west][inner sep=0.75pt]   [align=left] {00 1 22};
	\draw (307,22) node [anchor=north west][inner sep=0.75pt]   [align=left] {00122};
	\draw (211.5,85) node [anchor=north west][inner sep=0.75pt]   [align=left] {0 0122};
	\draw (280,85) node [anchor=north west][inner sep=0.75pt]   [align=left] {0012 2};
	\draw (342,85) node [anchor=north west][inner sep=0.75pt]   [align=left] {00 122};
	\draw (406,85) node [anchor=north west][inner sep=0.75pt]   [align=left] {001 22};
	\draw (210,160) node [anchor=north west][inner sep=0.75pt]   [align=left] {0 01 22};
	\draw (133,160) node [anchor=north west][inner sep=0.75pt]   [align=left] {0 012 2};
	\draw (340,234) node [anchor=north west][inner sep=0.75pt]   [align=left] {0 0 1 22};
	\draw (278,234) node [anchor=north west][inner sep=0.75pt]   [align=left] {0 0 12 2};
	\draw (209,234) node [anchor=north west][inner sep=0.75pt]   [align=left] {0 01 2 2};
	\draw (404,234) node [anchor=north west][inner sep=0.75pt]   [align=left] {00 1 2 2};
	\draw (296,301) node [anchor=north west][inner sep=0.75pt]   [align=left] {0 0 1 2 2};

	\end{tikzpicture}

	Bond lattice of weight $\bold k = (2,1,2)$ of the path graph on $3$ vertices
\end{center}
\end{example}

We record the following lemma from \cite[Lemma 3.4]{akv} which will be needed later. 
\begin{lem}\label{bij}Assume that $k_i \le 1$ for $i \in I^{re}$ where $\bold k = (k_i: i \in I)$. Let $\mathcal{P}(\bold k)$ be the collection of sets $\gamma=\{\beta_1,\dots,\beta_r\}$ (we allow $\beta_i=\beta_j$ for $i\neq j$) such that each $\beta_i\in\Delta_+$ and $\beta_1+\dots+\beta_r=\eta(\bold k)$.
	Then the map $\Psi: L_{G}(\bold k)\rightarrow \mathcal{P}(\bold k)$ defined by $\{J_1,\dots,J_k\}\mapsto \{\beta(J_1),\dots,\beta(J_k)\}$ is a bijection.
\end{lem}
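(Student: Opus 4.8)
\emph{Strategy.} The plan is to check, in turn, that $\Psi$ is well defined (its image lies in $\mathcal P(\bold k)$), that it is injective, and that it is surjective; only well-definedness carries content. Two facts will be used repeatedly and are part of the standard structure theory of a Borcherds algebra (see \cite{Ej96}, cf. \cite{akv}): the support of every element of $\Delta_+$ is a connected subgraph of $G$; and, given $\alpha\in\Delta_+$ and an imaginary simple root $\alpha_j$ with $\supp(\alpha+\alpha_j)$ connected, one has $\alpha+\alpha_j\in\Delta_+$. I would also use that $\mathrm{ad}\,e_j$ is locally nilpotent for $j\in I^{\mathrm{re}}$ (so the $\alpha_j$-string through a root behaves as in $\lie{sl}_2$-theory), and that the simple roots are linearly independent, so a multiset of simple roots is recovered uniquely from its sum in $Q_+$, the multiplicity of $\alpha_i$ being the $\alpha_i$-coefficient of that sum.

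\emph{Well-definedness.} Fix $\bold J=\{J_1,\dots,J_k\}\in L_G(\bold k)$ and one block $J=J_i$; I would show $\beta(J)=\sum_{\alpha\in J}\alpha\in\Delta_+$. First, a real simple root occurs in $J$ at most once: by condition (iii) of Definition~\ref{ing} its total multiplicity over all blocks is $k_j$, and $k_j\le 1$ for $j\in I^{\mathrm{re}}$. Since the underlying set $\bar J$ spans a connected subgraph, enumerate $J$ (with multiplicities) as $\gamma_1,\dots,\gamma_m$ so that $\{\gamma_1,\dots,\gamma_t\}$ has connected underlying set for every $t$: take the reverse of a ``remove-a-leaf'' ordering of a spanning tree of the subgraph on $\bar J$, and insert any repeated entries (necessarily imaginary) just after their first occurrence. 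Then one inducts on $t$. For $t=1$, $\gamma_1$ is a simple root, hence in $\Delta_+$. For the step, suppose $\beta_t:=\gamma_1+\cdots+\gamma_t\in\Delta_+$ and $\gamma_{t+1}$ is a simple root with $\supp(\beta_t+\gamma_{t+1})$ connected. If $\gamma_{t+1}$ is imaginary, the second standard fact gives $\beta_t+\gamma_{t+1}\in\Delta_+$. If $\gamma_{t+1}=\alpha_j$ is real, then $j\notin\supp(\beta_t)$ (as $\alpha_j$ occurs at most once in $J$), so connectedness of $\supp(\beta_t+\alpha_j)$ forces $j$ adjacent to some $l_0\in\supp(\beta_t)$; writing $\beta_t=\sum_l n_l\alpha_l$ one gets $(\beta_t,\alpha_j)=\sum_l n_l\epsilon_l a_{lj}<0$, since every term is $\le 0$ and the $l_0$-term is strictly negative. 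As $\alpha_j$ is real, $\langle\beta_t,\alpha_j^\vee\rangle<0$, so by $\lie{sl}_2$-string theory $\beta_t+\alpha_j\in\Delta$, hence in $\Delta_+$. Taking $t=m$ gives $\beta(J)\in\Delta_+$, and $\sum_i\beta(J_i)=\eta(\bold k)$ by condition (iii), so $\Psi(\bold J)\in\mathcal P(\bold k)$.

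\emph{Injectivity and surjectivity.} Injectivity is immediate from linear independence: each block $J$ is the unique multiset of simple roots summing to $\beta(J)$, so $\{\beta(J_1),\dots,\beta(J_k)\}$ determines $\{J_1,\dots,J_k\}$. For surjectivity, given $\{\beta_1,\dots,\beta_r\}\in\mathcal P(\bold k)$, let $J_i$ be the multiset of simple roots with $\beta(J_i)=\beta_i$. Then $\bar J_i=\supp(\beta_i)$ is connected (first standard fact), so condition (ii) holds; and the multiplicity of $\alpha_i$ in $J_1\dot{\cup}\cdots\dot{\cup}J_r$ equals the $\alpha_i$-coefficient of $\beta_1+\cdots+\beta_r=\eta(\bold k)$, i.e.\ $k_i$, so condition (iii) holds. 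Hence $\{J_1,\dots,J_r\}\in L_G(\bold k)$ and maps to the prescribed element.

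\emph{Main obstacle.} The only step with real content is the inductive claim keeping the partial sums in $\Delta_+$. Its imaginary case is not elementary: it genuinely invokes a Borcherds-specific structural lemma, there being no Serre relation to kill the bracket by $e_j$. The real case rests on the small but essential observation that connectivity of supports is exactly what makes $(\beta_t,\alpha_j)$ strictly negative, after which ordinary $\lie{sl}_2$ representation theory finishes it. Everything else is bookkeeping on top of linear independence of the simple roots.
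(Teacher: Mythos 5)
Your skeleton is essentially the paper's (which in turn defers to \cite{akv}): well-definedness is reduced to structural facts tying connected supports to positive roots of $\lie g$, injectivity to linear independence of the simple roots, and surjectivity to connectedness of the support of any positive root. The one genuine difference is the treatment of real simple roots: the paper's argument quotes that a nonzero $\{0,1\}$-combination of real simple roots with connected support already lies in $\Delta_+^{\mathrm{re}}$ and then only adjoins imaginary simple roots one at a time, whereas you adjoin real simple roots one at a time via the $\lie{sl}_2$-string argument (strict negativity of $(\beta_t,\alpha_j)$ from adjacency together with $j\notin\supp\beta_t$, plus local nilpotency of $\operatorname{ad}e_j$). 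That variant is correct and somewhat more self-contained than citing the ``connected $0$--$1$ sum of real simple roots is a real root'' fact.

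There is, however, a step that fails: your ordering rule. Inserting a repeated imaginary entry ``just after its first occurrence'' can make a partial sum equal to $2\alpha_i$ — for instance for the block $J=\{\alpha_i,\alpha_i,\alpha_j\}$ whenever the spanning-tree ordering starts at $i$. But $m\alpha_i$ is never a root for $m\ge 2$ (the weight-$2\alpha_i$ space is spanned by $[e_i,e_i]=0$), so the imaginary-addition lemma you invoke cannot be applied there; as stated, without a proviso excluding $\alpha=\alpha_i$, that lemma is simply false, and your induction halts even though $\beta(J)=2\alpha_i+\alpha_j$ genuinely is a positive root. The repair is easy and you should make it: adjoin repeated copies of an imaginary $\alpha_i$ only after the partial sum already contains a vertex adjacent to $i$ (e.g.\ list all distinct vertices first in the leaf-removal order, then append the repeats). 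Separately, be aware that Definition~\ref{ing} as written admits blocks whose underlying set is a single imaginary vertex with multiplicity at least $2$; these map to $m\alpha_i\notin\Delta_+$ and no ordering can rescue them — an edge case inherited from the statement in \cite{akv} (harmless in the applications, since the corresponding factors $\binom{\mult\beta(J)}{D(J,\bold J)}$ vanish), but your argument should at least not manufacture such non-roots when $\beta(J)$ itself is a root.
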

	\begin{rem}
Under the assumption of the above Lemma, by Lemma \ref{simplem}, each part $J_i$ of $\bold J \in L_G(\bold k)$ represents a positive root $\beta(J_i):=\sum_{\alpha\in J_i} \alpha \in \Delta_+$ .  
\end{rem}

\section{multicoloring and $\bold k$-chromatic symmetric  functions}\label{section3}
 Let $G$ be the graph of a Borcherds algebra $\lie g$. In this section, we give the examples of $\bold k$-multicoloring and prove an expression for the $\bold k$-chromatic symmetric functions.

\subsection{An expression for $\bold k$-chromatic polynomials}\label{k-poly}

\begin{example}\label{multi}
Let $G$ be the path graph on three vertices and let $\bold k=(2,1,2)$. We observe that $G$ cannot be $\bold k=(2,1,2)$-colored using two or fewer colors. We list the $\bold k=(2,1,2)$-colorings of $G$ using four colors, say blue, red, green and yellow, in which the first vertex receives the colors blue and red:

\noindent
\textbf{With three colors:}
\begin{center}
\hspace{-3cm}
\begin{tikzpicture}[scale=.6]
\draw (-1,0) node[anchor=east]{};
\draw[top color=blue,bottom color=red, xshift=3 cm,thick] (3 cm,0) circle(.3cm);
\draw[fill=green,xshift=4 cm,thick] (4 cm,0) circle(.3cm);
\draw[top color=blue,bottom color=red, xshift=7 cm,thick] (3 cm,0) circle (.3cm);
\foreach \y in {3.15,...,3.15}
\draw[xshift=\y cm,thick] (\y cm,0) -- +(1.4 cm,0);
\draw[xshift=8 cm,thick] (0: 3 mm) -- (0: 17 mm);
\end{tikzpicture}
\hspace{-1cm}
\begin{tikzpicture}[scale=.6]
\draw (-1,0) node[anchor=east]{};
\draw[top color=blue,bottom color=red, xshift=3 cm,thick] (3 cm,0) circle(.3cm);
\draw[fill=yellow,xshift=4 cm,thick] (4 cm,0) circle(.3cm);
\draw[top color=blue,bottom color=red, xshift=7 cm,thick] (3 cm,0) circle (.3cm);
\foreach \y in {3.15,...,3.15}
\draw[xshift=\y cm,thick] (\y cm,0) -- +(1.4 cm,0);
\draw[xshift=8 cm,thick] (0: 3 mm) -- (0: 17 mm);
\end{tikzpicture} \\
\end{center}
\textbf{With four colors:}
\begin{center}
\vspace{5mm}
\hspace{-3cm}
\begin{tikzpicture}[scale=.6]
\draw (-1,0) node[anchor=east]{};
\draw[top color=blue,bottom color=red, xshift=3 cm,thick] (3 cm,0) circle(.3cm);
\draw[fill=green,xshift=4 cm,thick] (4 cm,0) circle(.3cm);
\draw[top color=blue,bottom color=yellow, xshift=7 cm,thick] (3 cm,0) circle (.3cm);
\foreach \y in {3.15,...,3.15}
\draw[xshift=\y cm,thick] (\y cm,0) -- +(1.4 cm,0);
\draw[xshift=8 cm,thick] (0: 3 mm) -- (0: 17 mm);
\end{tikzpicture}
\hspace{-1cm}
\begin{tikzpicture}[scale=.6]
\draw (-1,0) node[anchor=east]{};
\draw[top color=blue,bottom color=red, xshift=3 cm,thick] (3 cm,0) circle(.3cm);
\draw[fill=yellow,xshift=4 cm,thick] (4 cm,0) circle(.3cm);
\draw[top color=blue,bottom color=green, xshift=7 cm,thick] (3 cm,0) circle (.3cm);
\foreach \y in {3.15,...,3.15}
\draw[xshift=\y cm,thick] (\y cm,0) -- +(1.4 cm,0);
\draw[xshift=8 cm,thick] (0: 3 mm) -- (0: 17 mm);
\end{tikzpicture} \\
\vspace{5mm}
\hspace{-3cm}
\begin{tikzpicture}[scale=.6]
\draw (-1,0) node[anchor=east]{};
\draw[top color=blue,bottom color=red, xshift=3 cm,thick] (3 cm,0) circle(.3cm);
\draw[fill=green,xshift=4 cm,thick] (4 cm,0) circle(.3cm);
\draw[top color=yellow,bottom color=red, xshift=7 cm,thick] (3 cm,0) circle (.3cm);
\foreach \y in {3.15,...,3.15}
\draw[xshift=\y cm,thick] (\y cm,0) -- +(1.4 cm,0);
\draw[xshift=8 cm,thick] (0: 3 mm) -- (0: 17 mm);
\end{tikzpicture}
\hspace{-1cm}
\begin{tikzpicture}[scale=.6]
\draw (-1,0) node[anchor=east]{};
\draw[top color=blue,bottom color=red, xshift=3 cm,thick] (3 cm,0) circle(.3cm);
\draw[fill=yellow,xshift=4 cm,thick] (4 cm,0) circle(.3cm);
\draw[top color=green,bottom color=red, xshift=7 cm,thick] (3 cm,0) circle (.3cm);
\foreach \y in {3.15,...,3.15}
\draw[xshift=\y cm,thick] (\y cm,0) -- +(1.4 cm,0);
\draw[xshift=8 cm,thick] (0: 3 mm) -- (0: 17 mm);
\end{tikzpicture}
\end{center}
\end{example}
We note that all possible $\bold k=(2,1,2)$ colorings of $G$ using the colors blue, red, green and yellow can be obtained from the above-given configurations. This is done by changing the choice of the colors in the first vertex and the colors in the remaining vertices according to the above configurations. Therefore the number of $\bold k=(2,1,2)$-colorings of the graph $G$ is equal to $\binom{4}{2} \cdot 6 = 36$. In Example \ref{mainexample}, we have calculated $\pi^G_{\bold k}(q)$ explicitly from which we see that $\pi^G_{\bold k}(4)$ is indeed equal to $36$. 


The generalized chromatic polynomial has the following well--known description. 
We denote by $P_k(\bold k,G)$ the set of all ordered $k$--tuples $(P_1,\dots,P_k)$ such that:
\begin{enumerate}
 \item[(i)] each $P_i$ is a non--empty independent subset of $I$, i.e. no two vertices have an edge between them; and

 \item[(ii)] the disjoint union of  $P_1,\dots, P_k$ is equal to the multiset $\{\underbrace{\alpha_i,\dots, \alpha_i}_{k_i\, \mathrm{times}}: i\in I \}$.
\end{enumerate} 
Then we have
\begin{equation}\label{defgenchr}\pi^G_\mathbf{k}(q)= \sum\limits_{k\ge0}|P_k(\bold k, G)| \, {q \choose k}.\end{equation}

\begin{rem}
	For a partition $P = (P_1,\dots,P_k) \in P_k(\bold k,G)$ its type is defined to be the composition $$(D(P_1, P),\dots,D(P_m, P))$$ where $D(P_i, P)$ is equal to the number of times $P_i$ appears in $P$ and $P_1,\dots,P_m$ are the distinct elements of $P$.
	
	For a partition (or a composition) $\lambda = (1^{a_1},2^{a_2},\dots)$ of $n$ we define $\lambda !$ to be the multinomial coefficient $\frac{n !}{ \prod\limits_{i \ge 1} a_i!}$.
	If we consider the elements of $P_k(\bold k,G)$ as multi-sets (instead of ordered tuples) then we have
	\begin{equation}\label{modifiedfefgenchr}
	\pi^G_\mathbf{k}(q)= \sum\limits_{k\ge0}\sum_{P \in P_k(\bold k, G)} \type(P)! {q \choose k}.
	\end{equation}
\end{rem}

\begin{example}\label{mainexample}
	Consider the path graph $G$ on three vertices. 
	\begin{center}
		\tikzset{every picture/.style={line width=0.75pt}} 
		
		\begin{tikzpicture}[x=0.75pt,y=0.75pt,yscale=-1,xscale=1]
		
		\draw [line width=1.5]    (85.5,118) -- (131.5,118) ;
		\draw   (80.5,118) .. controls (80.5,116.62) and (81.62,115.5) .. (83,115.5) .. controls (84.38,115.5) and (85.5,116.62) .. (85.5,118) .. controls (85.5,119.38) and (84.38,120.5) .. (83,120.5) .. controls (81.62,120.5) and (80.5,119.38) .. (80.5,118) -- cycle ;
		\draw   (131.5,118) .. controls (131.5,116.62) and (132.62,115.5) .. (134,115.5) .. controls (135.38,115.5) and (136.5,116.62) .. (136.5,118) .. controls (136.5,119.38) and (135.38,120.5) .. (134,120.5) .. controls (132.62,120.5) and (131.5,119.38) .. (131.5,118) -- cycle ;
		\draw [line width=1.5]    (136.5,118) -- (180.5,118) ;
		\draw   (180.5,118) .. controls (180.5,116.62) and (181.62,115.5) .. (183,115.5) .. controls (184.38,115.5) and (185.5,116.62) .. (185.5,118) .. controls (185.5,119.38) and (184.38,120.5) .. (183,120.5) .. controls (181.62,120.5) and (180.5,119.38) .. (180.5,118) -- cycle ;
		
		\draw (78,123) node [anchor=north west][inner sep=0.75pt]   [align=left] {0};
		\draw (129,123) node [anchor=north west][inner sep=0.75pt]   [align=left] {1};
		\draw (178,123) node [anchor=north west][inner sep=0.75pt]   [align=left] {2};
\end{tikzpicture}

	\end{center}

Then the graph $G(\bold k)$, the $\bold k=(2,1,2)$-join  of $G$ is given by

\begin{center}

	\tikzset{every picture/.style={line width=0.75pt}} 
	
	\begin{tikzpicture}[x=0.75pt,y=0.75pt,yscale=-1,xscale=1]
	
	\draw [line width=1.5]    (279.5,106) -- (328,79.5) ;
	\draw   (276.5,107) .. controls (276.5,105.62) and (277.62,104.5) .. (279,104.5) .. controls (280.38,104.5) and (281.5,105.62) .. (281.5,107) .. controls (281.5,108.38) and (280.38,109.5) .. (279,109.5) .. controls (277.62,109.5) and (276.5,108.38) .. (276.5,107) -- cycle ;
	\draw   (327.5,78) .. controls (327.5,76.62) and (328.62,75.5) .. (330,75.5) .. controls (331.38,75.5) and (332.5,76.62) .. (332.5,78) .. controls (332.5,79.38) and (331.38,80.5) .. (330,80.5) .. controls (328.62,80.5) and (327.5,79.38) .. (327.5,78) -- cycle ;
	\draw [line width=1.5]    (332,78.5) -- (378.5,105.5) ;
	\draw   (376.5,107) .. controls (376.5,105.62) and (377.62,104.5) .. (379,104.5) .. controls (380.38,104.5) and (381.5,105.62) .. (381.5,107) .. controls (381.5,108.38) and (380.38,109.5) .. (379,109.5) .. controls (377.62,109.5) and (376.5,108.38) .. (376.5,107) -- cycle ;
	\draw [line width=1.5]    (279,58.5) -- (279,104.5) ;
	\draw [line width=1.5]    (379,58.5) -- (379,104.5) ;
	\draw [line width=1.5]    (279.5,58) -- (328,76.5) ;
	\draw   (276.5,56) .. controls (276.5,54.62) and (277.62,53.5) .. (279,53.5) .. controls (280.38,53.5) and (281.5,54.62) .. (281.5,56) .. controls (281.5,57.38) and (280.38,58.5) .. (279,58.5) .. controls (277.62,58.5) and (276.5,57.38) .. (276.5,56) -- cycle ;
	\draw   (376.5,56) .. controls (376.5,54.62) and (377.62,53.5) .. (379,53.5) .. controls (380.38,53.5) and (381.5,54.62) .. (381.5,56) .. controls (381.5,57.38) and (380.38,58.5) .. (379,58.5) .. controls (377.62,58.5) and (376.5,57.38) .. (376.5,56) -- cycle ;
	\draw [line width=1.5]    (332.5,78) -- (376.5,56) ;
	
	\draw (272,110) node [anchor=north west][inner sep=0.75pt]   [align=left] {$\displaystyle 0^{1}$};
	\draw (323,81) node [anchor=north west][inner sep=0.75pt]   [align=left] {$\displaystyle 1^{1}$};
	\draw (372,110) node [anchor=north west][inner sep=0.75pt]   [align=left] {$\displaystyle 2^{1}$};
	\draw (272,32) node [anchor=north west][inner sep=0.75pt]   [align=left] {$\displaystyle  0^{2}$};
	\draw (371,32) node [anchor=north west][inner sep=0.75pt]   [align=left] {$\displaystyle 2^{2}$};

	\end{tikzpicture}

\end{center}
Now, upto permutation of the tuples, $$P_1(\bold k,G) = P_2(\bold k,G) = \phi,$$ $$P_3(\bold k,G) = \{(\{1\},\{0,2\},\{0,2\})\},$$
$$P_4(\bold k,G) = \{(\{1\},\{0\},\{2\},\{0,2\})\},$$
$$P_5(\bold k,G) = \{(\{1\},\{0\},\{0\},\{2\},\{2\})\}.$$
Therefore $|P_1(\bold k,G)| = |P_2(\bold k,G)| = 0$, $|P_3(\bold k,G)| = \frac{3!}{2}$, $|P_4(\bold k,G)| = 4!$, $|P_5(\bold k,G)| = \frac{5!}{2! 2!}$.
Therefore, by Equation \eqref{defgenchr}, we have 

$\pi_{\bold k}^G(q) = (\frac{1}{2})q(q-1)(q-2) + q(q-1)(q-2)(q-3) + (\frac{1}{4})q(q-1)(q-2)(q-3)(q-4) = (\frac{1}{4}) q(q-1)^2(q-2)^2.$

A direct computation shows that $$\pi_{\bold 1}^{G(\bold k)}(q) = q(q-1)^2(q-2)^2.$$
Also note that, $$\pi_{\bold k}^G(4) = 36 \text{	[c.f. Example \ref{multi}]}.$$

The above equalities explain Equation \eqref{defgenchr} and Equation \eqref{1=k}.

\end{example}



\subsection{An expression for $\bold k$-chromatic symmetric functions}\label{k-sym} 

We have the following expression for the $\bold k$-chromatic symmetric functions whose proof is immediate from the definition. We note that this is the extension of \eqref{defgenchr} to the case of chromatic symmetric functions.

\begin{prop}\label{eqdef}
  \begin{equation}\label{equiv}
X_{G}^{\bold k}= \sum_{k \ge 1} \sum_{\substack{P \in P_{k}(\bold k,G)\\ P = (P_1,P_2, \dots, P_k) }} \sum_{\substack{J \,\subseteq \mathbb{N}\\J = \{i_1,i_2,\dots,i_k\}}} x_{i_1}^{|P_1|} x_{i_2}^{|P_2|} \cdots x_{i_k}^{|P_k|}
\end{equation}
\end{prop}
Note that, substituting $x_i = \begin{cases}
 1 \text{ if } 1 \le i \le q\\ 0 \text{ otherwise}
\end{cases}$  in Equation \eqref{equiv} yields Equation \eqref{defgenchr}.

If we consider the elements of $P_k(\bold k,G)$ as multi-sets then we have
\begin{equation}\label{equivv}
X_G^{\bold k}= \sum\limits_{k\ge0}\sum_{\substack{P \in P_{k}(\bold k,G)\\ P = \{P_1,P_2, \dots, P_k\} }} \type(P)! \sum_{\substack{J \,\subseteq \mathbb{N}\\J = \{i_1,i_2,\dots,i_k\}}} x_{i_1}^{|P_1|} x_{i_2}^{|P_2|} \cdots x_{i_k}^{|P_k|}.
\end{equation} 

\begin{example}
	Let $G$ be the path graph on three vertices and let $\bold k = (2,1,2)$. From Example \ref{mainexample},  upto permutation of the tuples, $$P_1(\bold k,G) = P_2(\bold k,G) = \phi,$$ $$P_3(\bold k,G) = \{(\{1\},\{0,2\},\{0,2\})\},$$
	$$P_4(\bold k,G) = \{(\{1\},\{0\},\{2\},\{0,2\})\},$$
	$$P_5(\bold k,G) = \{(\{1\},\{0\},\{0\},\{2\},\{2\})\}.$$
	
	Now, from Equation \eqref{equivv}, we get 
	\begin{equation}
		X_G^{\bold k} = 3 \bold m_{221} +  24 \bold m_{2111} + 30 \bold m_{11111}.
	\end{equation}
\end{example}

\section{Denominator identity and the Chromatic symmetric functions}\label{mains}
In this section, we prove a connection between the root multiplicities of $\lie g$ and the $\bold k$-chromatic symmetric function of $G$. The main results of this section are Theorems \ref{mainthm} and \ref{mainkthm}. 
\subsection{The sum side of the denominator identity} In this subsection, we will prove that the chromatic symmetric function can be obtained from the sum side of modified Weyl denominator. 
\begin{example}\label{ssl3}
	As a motivating example, assume $\lie g = \lie{sl}_3(\mathbb C)$ the smallest non-trivial finite-dimensional simple Lie algebra for our purpose (The graph of $\lie{sl}_2(\mathbb C)$ is a single point and has no edges. So we are not considering this algebra).  This Lie algebra $\lie g$ consists of $3 \times 3$ traceless complex matrices. The Dynkin diagram $G$ of the algebra $\lie g$ is given below.

\begin{center}
		\tikzset{every picture/.style={line width=0.75pt}} 
	
	\begin{tikzpicture}[x=0.75pt,y=0.75pt,yscale=-1,xscale=1]
	
	\draw  [fill={rgb, 255:red, 0; green, 0; blue, 0 }  ,fill opacity=1 ] (367,201.25) .. controls (367,199.46) and (368.46,198) .. (370.25,198) .. controls (372.04,198) and (373.5,199.46) .. (373.5,201.25) .. controls (373.5,203.04) and (372.04,204.5) .. (370.25,204.5) .. controls (368.46,204.5) and (367,203.04) .. (367,201.25) -- cycle ;
	\draw  [fill={rgb, 255:red, 0; green, 0; blue, 0 }  ,fill opacity=1 ] (315,201.25) .. controls (315,199.46) and (316.46,198) .. (318.25,198) .. controls (320.04,198) and (321.5,199.46) .. (321.5,201.25) .. controls (321.5,203.04) and (320.04,204.5) .. (318.25,204.5) .. controls (316.46,204.5) and (315,203.04) .. (315,201.25) -- cycle ;
	\draw    (370.25,201.25) -- (318.25,201.25) ;
	
	\draw (362,210) node [anchor=north west][inner sep=0.75pt]   [align=left] {$\displaystyle \alpha _{2}$};
	\draw (309,210) node [anchor=north west][inner sep=0.75pt]   [align=left] {$\displaystyle \alpha _{1}$};

	\end{tikzpicture}
\end{center}

	The graph $G$ is the path graph on two vertices. In general, the Dynkin diagram of $\lie{sl}_n(\mathbb C)$ is the path graph on $n-1$ vertices. The set of simple roots of $\lie g$ is equal to $\Pi = \{\alpha_1=\epsilon_1-\epsilon_2,\alpha_2=\epsilon_2-\epsilon_3\}$ and the set of positive roots of $\lie g$ is $\Delta_+ = \{\alpha_1, \alpha_2, \alpha_1+\alpha_2\}$. The Weyl group $W$ is the symmetric group on three symbols $\{\epsilon_1,\epsilon_2,\epsilon_3\}$ permuting the indices. $\rho = \frac{1}{2}\sum_{ \alpha \in \Delta_+}\alpha = \epsilon_1-\epsilon_3$. This implies that $$U(X_i) = 1 - X_i e^{-\alpha_1} - X_i e^{-\alpha_2} + X_i^3 e^{-2\alpha_2-\alpha_1} + X_i^3 e^{-2\alpha_1 -\alpha_2} - X_i^4 e^{-2\alpha_1 -2\alpha_2} \text{	[c.f. Equation \eqref{mdenom}]}.$$
	We claim that, the coefficient of $e^{-\alpha_1-\alpha_2}$ in the product $\prod_{i = 1}^{\infty}U(X_i)$ is $X_G$. For this reason, we can assume $$U(X_i) = 1 - X_i e^{-\alpha_1} - X_i e^{-\alpha_2}.$$
	Consider the product:
	
	$$\prod_{i = 1}^{\infty}U(X_i) = (1 - X_1 e^{-\alpha_1} - X_1 e^{-\alpha_2})(1 - X_2 e^{-\alpha_1} - X_2 e^{-\alpha_2})\cdots(1 - X_k e^{-\alpha_1} - X_k e^{-\alpha_2})\cdots$$
	Then the coefficient of $e^{-\alpha_1-\alpha_2}$ in the above product is equal to $$\sum\limits_{\substack{(i_1,i_2) \in \mathbb N^2 \\ i_1 \ne i_2}}X_{i_1}X_{i_2} = 2! e_2 = X_G$$
	where $e_2$ is the second elementary symmetric function.
\end{example}
\begin{rem}
	If $G$ is the complete graph on $n$ vertices, then $X_G = n! e_n$ where $e_n$ is the $n$th elementary symmetric function.
\end{rem}
\begin{example}\label{ssl3h}
	In this example, we consider the untwisted affine Lie algebra $\lie g = \widehat{\lie{sl}_3}$. This is an infinite-dimensional Lie algebra.  Let $G$ be the Dynkin diagram of $\lie g$. Then $G$ is the cycle graph (complete graph) on three vertices:
	
	\begin{center}

		\tikzset{every picture/.style={line width=0.75pt}} 
		
		\begin{tikzpicture}[x=0.75pt,y=0.75pt,yscale=-1,xscale=1]
		
		\draw  [fill={rgb, 255:red, 0; green, 0; blue, 0 }  ,fill opacity=1 ] (367,201.25) .. controls (367,199.46) and (368.46,198) .. (370.25,198) .. controls (372.04,198) and (373.5,199.46) .. (373.5,201.25) .. controls (373.5,203.04) and (372.04,204.5) .. (370.25,204.5) .. controls (368.46,204.5) and (367,203.04) .. (367,201.25) -- cycle ;
		\draw  [fill={rgb, 255:red, 0; green, 0; blue, 0 }  ,fill opacity=1 ] (315,201.25) .. controls (315,199.46) and (316.46,198) .. (318.25,198) .. controls (320.04,198) and (321.5,199.46) .. (321.5,201.25) .. controls (321.5,203.04) and (320.04,204.5) .. (318.25,204.5) .. controls (316.46,204.5) and (315,203.04) .. (315,201.25) -- cycle ;
		\draw    (370.25,201.25) -- (318.25,201.25) ;
		\draw  [fill={rgb, 255:red, 0; green, 0; blue, 0 }  ,fill opacity=1 ] (342,169.3) .. controls (341.97,167.51) and (343.4,166.03) .. (345.2,166) .. controls (346.99,165.97) and (348.47,167.4) .. (348.5,169.2) .. controls (348.53,170.99) and (347.1,172.47) .. (345.3,172.5) .. controls (343.51,172.53) and (342.03,171.1) .. (342,169.3) -- cycle ;
		\draw    (318.25,201.25) -- (345.25,169.25) ;
		\draw    (345.25,169.25) -- (370.25,201.25) ;
		
		\draw (362,210) node [anchor=north west][inner sep=0.75pt]   [align=left] {$\displaystyle \alpha _{2}$};
		\draw (309,210) node [anchor=north west][inner sep=0.75pt]   [align=left] {$\displaystyle \alpha _{1}$};
		\draw (335,149) node [anchor=north west][inner sep=0.75pt]   [align=left] {$\displaystyle \alpha _{0}$};

		\end{tikzpicture}
		
	\end{center} In general, the Dynkin diagram of $\widehat{\lie{sl}_n}$ is the cycle graph on $n$ vertices. Again, we are interested in the coefficient of $e^{-\alpha_0-\alpha_1-\alpha_2}$. For this reason, like in the case of $\lie{sl}_3$, we can assume that $$U(X_i) = 1 - X_i e^{-\alpha_0} - X_i e^{-\alpha_1}- X_i e^{-\alpha_2}.$$
	Then, the coefficient of $e^{-\alpha_0-\alpha_1-\alpha_2}$ in $\prod_{i = 1}^{\infty}U(X_i)$ is equal to $$\sum\limits_{\substack{(i_1,i_2,i_3) \in \mathbb N^2 \\ i_1,i_2,i_3 \text{ are distinct}}}X_{i_1}X_{i_2}X_{i_3} = 3! e_3 = X_G.$$
\end{example}

We claim that this is true for the graph of an arbitrary Borcherds algebra in the following proposition. The proof is exactly the same as the steps in the above two examples.
\begin{prop}\label{helprop}
		\textit{Let $G$ be the graph of a Borcherds algebra $\lie g$. 	Assume that the set $I$ is finite, i.e., $\lie g$ is of finite rank. We set $\eta(\mathbf{1})=\sum_{i\in I}\alpha_i\in Q_+$. Then 
	$$   \Big(\prod_{i = 1}^{\infty}U_1(X_i) \Big)[e^{-\eta(\bold 1)}] = (-1)^{\htt(\eta(\bold 1))}\  X_G.$$}
	
\end{prop}
\begin{proof}
	The required coefficient is equal to 
	$$\sum_{k = 1}^{\infty}\sum_{\substack{J \subseteq \mathbb{N}\\J = \{i_1,i_2,\dots,i_k\}}}\sum_{((w_1,\gamma_1),(w_2,\gamma_2),\dots,(w_k,\gamma_k)) \in (W \times \Omega)^{k}}(-1)^{\sum_{i=1}^k\htt(\gamma_i)}(-1)^{\ell(w_1\cdots w_k)} \prod_{j=1}^{k}\Big(X_{i_j}^{\ell(w_j)+\htt(\gamma_j)}\Big)$$
	where the sum ranges over all $k$--tuples $((w_1,\gamma_1),(w_2,\gamma_2),\dots,(w_k,\gamma_k)) \in (W \times \Omega)^{k}$ such that
	\begin{align*}
	&\bullet (w_i,\gamma_i) \text{ is stable for all } 1\le i\le k,&\\&
	\bullet \ I(w_1)\ \dot{\cup} \cdots \dot{\cup}\ I(w_k)=I^{\mathrm{re}},&\\&
	\bullet \ I(\gamma_1)\ \dot{\cup} \cdots \dot{\cup}\ I(\gamma_k)=I^{\mathrm{im}},&\\&
	\bullet \ I(w_i)\cup I(\gamma_i)\neq \emptyset \ \text{ for each $1\le i\le k$},&\\&
	\end{align*}
	
	It follows that $\big(I(w_1)\cup I(\gamma_1),\dots,I(w_k)\cup I(\gamma_k)\big)\in P_k(\bold 1,G)$ and each element is obtained in this way. So the sum ranges over all elements in $P_k(\bold 1,G).$
	Since $w_1\cdots w_k$ is a subword of a Coxeter element we get
	$$(-1)^{\ell(w_1\cdots w_k)}=(-1)^{|I^{\mathrm{re}}|},$$
	and hence $\Big(\prod_{i = 1}^{\infty}U(X_i)\Big)[e^{-\eta(\bold k)}]$ is equal to $$(-1)^{\htt(\eta(\bold 1))}\sum_{k \ge 1} \sum_{\substack{\mathcal{P} \in P_{k}(\bold 1,G)\\ \mathcal{P} = (P_1,P_2, \dots, P_k) }} \sum_{\substack{J \subseteq \mathbb{N}\\ J = \{i_1,i_2,\dots,i_k\}}} x_{i_1}^{|P_1|} x_{i_2}^{|P_2|} \cdots x_{i_k}^{|P_k|}$$ 
	Now, Proposition \ref{eqdef} finishes the proof.
\end{proof}
\begin{rem}
Assume that the set $I$ is not finite in the above theorem. Then consider $\bold k = (k_i:i \in I) \in \mathbb Z_+^{I}$ with finite support such that $k_i=1$ for $i \in \supp \bold k$. We can apply the above theorem for this $\bold k$ by replacing the graph $G$ by the subgraph of $G$ generated by $\supp \bold k$.
\end{rem}

\subsection{The product side of the denominator identity}\label{prod}
In this subsection, using the product side of the modified Weyl denominator identity, we will derive an expression for chromatic symmetric function in terms of root multiplicities of Borcherds algebras. 
Proposition~\ref{helprop} and Equation \eqref{mdenom} together imply that the chromatic symmetric function $X_G$ is given by the coefficient of $e^{-\eta(\bold 1)}$ in 
$$(-1)^{\htt(\eta(\bold 1))}\prod_{i=1}^{\infty}\prod_{\alpha\in \Delta_+}(1-X_i^{\htt(\alpha)}e^{-\alpha})^{\text{ dim }\mathfrak{g}_{\alpha}}.$$
This product is equal to,
$$(-1)^{\htt(\eta(\bold 1))}\prod_{i=1}^{\infty}\prod_{\alpha\in \Delta_+}\Big(1-(\mult \alpha) X_i^{\htt(\alpha)}e^{-\alpha})+\binom{\mult \alpha}{2}X_i^{\htt(2\alpha)}e^{-2\alpha}-\cdots\Big).$$
Consider the lattice $\mathcal{P}(\bold 1)$ [c.f. Lemma \ref{bij}]. 
Then, the coefficient of 
$e^{-\eta(\bold 1)}$ in $$(-1)^{\htt(\eta(\bold 1))}\prod_{i=1}^{\infty}\prod_{\alpha\in \Delta_+}\Big(1-(\mult \alpha) X_i^{\htt(\alpha)}e^{-\alpha})+ \text{ higher order terms}\Big)$$ is equal to $$  (-1)^{\htt(\eta(\bold 1))}\sum_{\substack{P \in \mathcal{P}(\bold 1) \\  P = \{\alpha_1,\dots,\alpha_k\}}} (-1)^k\mult(P)\sum_{\substack{J \subseteq \mathbb N \\ J = \{i_1 ,i_2, \dots, i_k\}}}    \prod_{\alpha_j \in P }\Big( X_{i_j}^{\htt (\alpha_j)}\Big)$$ where $\mult P = \prod_{\alpha_i \in P} \mult \alpha_i$. We can defined multiplicity of an element of $L_G$ using the bijection $\Psi$ defined in Lemma \ref{bij}. Now, Lemma \ref{bij} finishes the proof of Theorem \ref{mainthm}.


\begin{table}
	\caption{Graphs of order 4 are distinguished by the chromatic symmetric functions}
	\label{t1}
	\begin{tabular}{|c|c|c|c|c|c|}
		\hline
		S.No. &	\text{Graphs} & $(4)$ & $(2,1,1)$ & $(3,1)$ &  $(2,2)$ \\ 
		\hline

		1&\tikzset{every picture/.style={line width=0.75pt}} 
		
		\begin{tikzpicture}[x=0.75pt,y=0.75pt,yscale=-1,xscale=1]
		
		\draw  [fill={rgb, 255:red, 0; green, 0; blue, 0 }  ,fill opacity=1 ] (91,100.25) .. controls (91,98.46) and (92.46,97) .. (94.25,97) .. controls (96.04,97) and (97.5,98.46) .. (97.5,100.25) .. controls (97.5,102.04) and (96.04,103.5) .. (94.25,103.5) .. controls (92.46,103.5) and (91,102.04) .. (91,100.25) -- cycle ;
		\draw  [fill={rgb, 255:red, 0; green, 0; blue, 0 }  ,fill opacity=1 ] (181,100.25) .. controls (181,98.46) and (182.46,97) .. (184.25,97) .. controls (186.04,97) and (187.5,98.46) .. (187.5,100.25) .. controls (187.5,102.04) and (186.04,103.5) .. (184.25,103.5) .. controls (182.46,103.5) and (181,102.04) .. (181,100.25) -- cycle ;
		\draw  [fill={rgb, 255:red, 0; green, 0; blue, 0 }  ,fill opacity=1 ] (151,100.25) .. controls (151,98.46) and (152.46,97) .. (154.25,97) .. controls (156.04,97) and (157.5,98.46) .. (157.5,100.25) .. controls (157.5,102.04) and (156.04,103.5) .. (154.25,103.5) .. controls (152.46,103.5) and (151,102.04) .. (151,100.25) -- cycle ;
		\draw  [fill={rgb, 255:red, 0; green, 0; blue, 0 }  ,fill opacity=1 ] (121,100.25) .. controls (121,98.46) and (122.46,97) .. (124.25,97) .. controls (126.04,97) and (127.5,98.46) .. (127.5,100.25) .. controls (127.5,102.04) and (126.04,103.5) .. (124.25,103.5) .. controls (122.46,103.5) and (121,102.04) .. (121,100.25) -- cycle ;

		\end{tikzpicture}
		& 0 & 0 & 0 & 0 \\
		\hline

		2&\tikzset{every picture/.style={line width=0.75pt}} 
		
		\begin{tikzpicture}[x=0.75pt,y=0.75pt,yscale=-1,xscale=1]
		
		\draw  [fill={rgb, 255:red, 0; green, 0; blue, 0 }  ,fill opacity=1 ] (91,100.25) .. controls (91,98.46) and (92.46,97) .. (94.25,97) .. controls (96.04,97) and (97.5,98.46) .. (97.5,100.25) .. controls (97.5,102.04) and (96.04,103.5) .. (94.25,103.5) .. controls (92.46,103.5) and (91,102.04) .. (91,100.25) -- cycle ;
		\draw  [fill={rgb, 255:red, 0; green, 0; blue, 0 }  ,fill opacity=1 ] (181,100.25) .. controls (181,98.46) and (182.46,97) .. (184.25,97) .. controls (186.04,97) and (187.5,98.46) .. (187.5,100.25) .. controls (187.5,102.04) and (186.04,103.5) .. (184.25,103.5) .. controls (182.46,103.5) and (181,102.04) .. (181,100.25) -- cycle ;
		\draw  [fill={rgb, 255:red, 0; green, 0; blue, 0 }  ,fill opacity=1 ] (151,100.25) .. controls (151,98.46) and (152.46,97) .. (154.25,97) .. controls (156.04,97) and (157.5,98.46) .. (157.5,100.25) .. controls (157.5,102.04) and (156.04,103.5) .. (154.25,103.5) .. controls (152.46,103.5) and (151,102.04) .. (151,100.25) -- cycle ;
		\draw  [fill={rgb, 255:red, 0; green, 0; blue, 0 }  ,fill opacity=1 ] (121,100.25) .. controls (121,98.46) and (122.46,97) .. (124.25,97) .. controls (126.04,97) and (127.5,98.46) .. (127.5,100.25) .. controls (127.5,102.04) and (126.04,103.5) .. (124.25,103.5) .. controls (122.46,103.5) and (121,102.04) .. (121,100.25) -- cycle ;
		\draw    (154.25,100.25) -- (184.25,100.25) ;

		\end{tikzpicture}
		&0  & -1 & 0 & 0 \\
		\hline

		3&\tikzset{every picture/.style={line width=0.75pt}} 
		
		\begin{tikzpicture}[x=0.75pt,y=0.75pt,yscale=-1,xscale=1]
		
		\draw  [fill={rgb, 255:red, 0; green, 0; blue, 0 }  ,fill opacity=1 ] (91,100.25) .. controls (91,98.46) and (92.46,97) .. (94.25,97) .. controls (96.04,97) and (97.5,98.46) .. (97.5,100.25) .. controls (97.5,102.04) and (96.04,103.5) .. (94.25,103.5) .. controls (92.46,103.5) and (91,102.04) .. (91,100.25) -- cycle ;
		\draw  [fill={rgb, 255:red, 0; green, 0; blue, 0 }  ,fill opacity=1 ] (181,100.25) .. controls (181,98.46) and (182.46,97) .. (184.25,97) .. controls (186.04,97) and (187.5,98.46) .. (187.5,100.25) .. controls (187.5,102.04) and (186.04,103.5) .. (184.25,103.5) .. controls (182.46,103.5) and (181,102.04) .. (181,100.25) -- cycle ;
		\draw  [fill={rgb, 255:red, 0; green, 0; blue, 0 }  ,fill opacity=1 ] (151,100.25) .. controls (151,98.46) and (152.46,97) .. (154.25,97) .. controls (156.04,97) and (157.5,98.46) .. (157.5,100.25) .. controls (157.5,102.04) and (156.04,103.5) .. (154.25,103.5) .. controls (152.46,103.5) and (151,102.04) .. (151,100.25) -- cycle ;
		\draw  [fill={rgb, 255:red, 0; green, 0; blue, 0 }  ,fill opacity=1 ] (121,100.25) .. controls (121,98.46) and (122.46,97) .. (124.25,97) .. controls (126.04,97) and (127.5,98.46) .. (127.5,100.25) .. controls (127.5,102.04) and (126.04,103.5) .. (124.25,103.5) .. controls (122.46,103.5) and (121,102.04) .. (121,100.25) -- cycle ;
		\draw    (154.25,100.25) -- (184.25,100.25) ;
		\draw    (124.25,100.25) -- (154.25,100.25) ;

		\end{tikzpicture}
		&0   &   & 1  & 0  \\
		\hline

		4&\tikzset{every picture/.style={line width=0.75pt}} 
		
		\begin{tikzpicture}[x=0.75pt,y=0.75pt,yscale=-1,xscale=1]
		
		\draw  [fill={rgb, 255:red, 0; green, 0; blue, 0 }  ,fill opacity=1 ] (91,100.25) .. controls (91,98.46) and (92.46,97) .. (94.25,97) .. controls (96.04,97) and (97.5,98.46) .. (97.5,100.25) .. controls (97.5,102.04) and (96.04,103.5) .. (94.25,103.5) .. controls (92.46,103.5) and (91,102.04) .. (91,100.25) -- cycle ;
		\draw  [fill={rgb, 255:red, 0; green, 0; blue, 0 }  ,fill opacity=1 ] (181,100.25) .. controls (181,98.46) and (182.46,97) .. (184.25,97) .. controls (186.04,97) and (187.5,98.46) .. (187.5,100.25) .. controls (187.5,102.04) and (186.04,103.5) .. (184.25,103.5) .. controls (182.46,103.5) and (181,102.04) .. (181,100.25) -- cycle ;
		\draw  [fill={rgb, 255:red, 0; green, 0; blue, 0 }  ,fill opacity=1 ] (151,100.25) .. controls (151,98.46) and (152.46,97) .. (154.25,97) .. controls (156.04,97) and (157.5,98.46) .. (157.5,100.25) .. controls (157.5,102.04) and (156.04,103.5) .. (154.25,103.5) .. controls (152.46,103.5) and (151,102.04) .. (151,100.25) -- cycle ;
		\draw  [fill={rgb, 255:red, 0; green, 0; blue, 0 }  ,fill opacity=1 ] (121,100.25) .. controls (121,98.46) and (122.46,97) .. (124.25,97) .. controls (126.04,97) and (127.5,98.46) .. (127.5,100.25) .. controls (127.5,102.04) and (126.04,103.5) .. (124.25,103.5) .. controls (122.46,103.5) and (121,102.04) .. (121,100.25) -- cycle ;
		\draw    (154.25,100.25) -- (184.25,100.25) ;
		\draw    (97.5,100.25) -- (113.75,100.25) -- (127.5,100.25) ;

		\end{tikzpicture}
		&0  &  &  & 1  \\
		\hline

		5&\tikzset{every picture/.style={line width=0.75pt}} 
		
		\begin{tikzpicture}[x=0.75pt,y=0.75pt,yscale=-1,xscale=1]
		
		\draw  [fill={rgb, 255:red, 0; green, 0; blue, 0 }  ,fill opacity=1 ] (105,142.25) .. controls (105,140.46) and (106.46,139) .. (108.25,139) .. controls (110.04,139) and (111.5,140.46) .. (111.5,142.25) .. controls (111.5,144.04) and (110.04,145.5) .. (108.25,145.5) .. controls (106.46,145.5) and (105,144.04) .. (105,142.25) -- cycle ;
		\draw  [fill={rgb, 255:red, 0; green, 0; blue, 0 }  ,fill opacity=1 ] (120,160.25) .. controls (120,158.46) and (121.46,157) .. (123.25,157) .. controls (125.04,157) and (126.5,158.46) .. (126.5,160.25) .. controls (126.5,162.04) and (125.04,163.5) .. (123.25,163.5) .. controls (121.46,163.5) and (120,162.04) .. (120,160.25) -- cycle ;
		\draw  [fill={rgb, 255:red, 0; green, 0; blue, 0 }  ,fill opacity=1 ] (91,160.25) .. controls (91,158.46) and (92.46,157) .. (94.25,157) .. controls (96.04,157) and (97.5,158.46) .. (97.5,160.25) .. controls (97.5,162.04) and (96.04,163.5) .. (94.25,163.5) .. controls (92.46,163.5) and (91,162.04) .. (91,160.25) -- cycle ;
		\draw  [fill={rgb, 255:red, 0; green, 0; blue, 0 }  ,fill opacity=1 ] (151,160.25) .. controls (151,158.46) and (152.46,157) .. (154.25,157) .. controls (156.04,157) and (157.5,158.46) .. (157.5,160.25) .. controls (157.5,162.04) and (156.04,163.5) .. (154.25,163.5) .. controls (152.46,163.5) and (151,162.04) .. (151,160.25) -- cycle ;
		\draw    (108.25,142.25) -- (123.25,159.25) ;
		\draw    (94.25,160.25) -- (123.25,160.25) ;
		\draw    (94.25,160.25) -- (108.25,142.25) ;

		\end{tikzpicture}
		& 0  &  & 2  & 0  \\
		\hline

		6&

		\tikzset{every picture/.style={line width=0.75pt}} 
		
		\begin{tikzpicture}[x=0.75pt,y=0.75pt,yscale=-1,xscale=1]
		
		\draw  [fill={rgb, 255:red, 0; green, 0; blue, 0 }  ,fill opacity=1 ] (291.75,100.25) .. controls (291.75,98.46) and (293.21,97) .. (295,97) .. controls (296.79,97) and (298.25,98.46) .. (298.25,100.25) .. controls (298.25,102.04) and (296.79,103.5) .. (295,103.5) .. controls (293.21,103.5) and (291.75,102.04) .. (291.75,100.25) -- cycle ;
		\draw  [fill={rgb, 255:red, 0; green, 0; blue, 0 }  ,fill opacity=1 ] (318.75,100.25) .. controls (318.75,98.46) and (320.21,97) .. (322,97) .. controls (323.79,97) and (325.25,98.46) .. (325.25,100.25) .. controls (325.25,102.04) and (323.79,103.5) .. (322,103.5) .. controls (320.21,103.5) and (318.75,102.04) .. (318.75,100.25) -- cycle ;
		\draw  [fill={rgb, 255:red, 0; green, 0; blue, 0 }  ,fill opacity=1 ] (318.75,127.25) .. controls (318.75,125.46) and (320.21,124) .. (322,124) .. controls (323.79,124) and (325.25,125.46) .. (325.25,127.25) .. controls (325.25,129.04) and (323.79,130.5) .. (322,130.5) .. controls (320.21,130.5) and (318.75,129.04) .. (318.75,127.25) -- cycle ;
		\draw  [fill={rgb, 255:red, 0; green, 0; blue, 0 }  ,fill opacity=1 ] (291.75,127.25) .. controls (291.75,125.46) and (293.21,124) .. (295,124) .. controls (296.79,124) and (298.25,125.46) .. (298.25,127.25) .. controls (298.25,129.04) and (296.79,130.5) .. (295,130.5) .. controls (293.21,130.5) and (291.75,129.04) .. (291.75,127.25) -- cycle ;
		\draw    (295,100.25) -- (295,127.25) ;
		\draw    (322,100.25) -- (322,127.25) ;
		\draw    (295,127.25) -- (323,127.25) ;

		\end{tikzpicture}
		
		& -1 &  & 2 &  \\
		\hline

		\tikzset{every picture/.style={line width=0.75pt}} 
		
		7&\begin{tikzpicture}[x=0.75pt,y=0.75pt,yscale=-1,xscale=1]
		
		\draw  [fill={rgb, 255:red, 0; green, 0; blue, 0 }  ,fill opacity=1 ] (205,137.25) .. controls (205,135.46) and (206.46,134) .. (208.25,134) .. controls (210.04,134) and (211.5,135.46) .. (211.5,137.25) .. controls (211.5,139.04) and (210.04,140.5) .. (208.25,140.5) .. controls (206.46,140.5) and (205,139.04) .. (205,137.25) -- cycle ;
		\draw  [fill={rgb, 255:red, 0; green, 0; blue, 0 }  ,fill opacity=1 ] (265,137.25) .. controls (265,135.46) and (266.46,134) .. (268.25,134) .. controls (270.04,134) and (271.5,135.46) .. (271.5,137.25) .. controls (271.5,139.04) and (270.04,140.5) .. (268.25,140.5) .. controls (266.46,140.5) and (265,139.04) .. (265,137.25) -- cycle ;
		\draw  [fill={rgb, 255:red, 0; green, 0; blue, 0 }  ,fill opacity=1 ] (235,137.25) .. controls (235,135.46) and (236.46,134) .. (238.25,134) .. controls (240.04,134) and (241.5,135.46) .. (241.5,137.25) .. controls (241.5,139.04) and (240.04,140.5) .. (238.25,140.5) .. controls (236.46,140.5) and (235,139.04) .. (235,137.25) -- cycle ;
		\draw    (211.5,137.25) -- (227.75,137.25) -- (241.5,137.25) ;
		\draw    (238.25,137.25) -- (268.25,137.25) ;
		\draw    (238.25,137.25) -- (238.25,165.25) ;
		\draw  [fill={rgb, 255:red, 0; green, 0; blue, 0 }  ,fill opacity=1 ] (235,165.25) .. controls (235,163.46) and (236.46,162) .. (238.25,162) .. controls (240.04,162) and (241.5,163.46) .. (241.5,165.25) .. controls (241.5,167.04) and (240.04,168.5) .. (238.25,168.5) .. controls (236.46,168.5) and (235,167.04) .. (235,165.25) -- cycle ;

		\end{tikzpicture}
		& -1  &  & 3 &  \\
		\hline

		8&\tikzset{every picture/.style={line width=0.75pt}} 
		
		\begin{tikzpicture}[x=0.75pt,y=0.75pt,yscale=-1,xscale=1]
		
		\draw  [fill={rgb, 255:red, 0; green, 0; blue, 0 }  ,fill opacity=1 ] (277.76,136.51) .. controls (279.55,136.52) and (281,137.97) .. (281,139.77) .. controls (281,141.56) and (279.54,143.02) .. (277.75,143.01) .. controls (275.95,143.01) and (274.5,141.55) .. (274.5,139.76) .. controls (274.5,137.96) and (275.96,136.51) .. (277.76,136.51) -- cycle ;
		\draw  [fill={rgb, 255:red, 0; green, 0; blue, 0 }  ,fill opacity=1 ] (259.73,151.49) .. controls (261.53,151.49) and (262.98,152.95) .. (262.98,154.74) .. controls (262.98,156.54) and (261.52,157.99) .. (259.73,157.99) .. controls (257.93,157.98) and (256.48,156.53) .. (256.48,154.73) .. controls (256.48,152.94) and (257.94,151.48) .. (259.73,151.49) -- cycle ;
		\draw  [fill={rgb, 255:red, 0; green, 0; blue, 0 }  ,fill opacity=1 ] (259.77,122.49) .. controls (261.57,122.49) and (263.02,123.95) .. (263.02,125.74) .. controls (263.02,127.54) and (261.56,128.99) .. (259.77,128.99) .. controls (257.97,128.98) and (256.52,127.53) .. (256.52,125.73) .. controls (256.52,123.94) and (257.98,122.48) .. (259.77,122.49) -- cycle ;
		\draw  [fill={rgb, 255:red, 0; green, 0; blue, 0 }  ,fill opacity=1 ] (302,140.25) .. controls (302,138.46) and (303.46,137) .. (305.25,137) .. controls (307.04,137) and (308.5,138.46) .. (308.5,140.25) .. controls (308.5,142.04) and (307.04,143.5) .. (305.25,143.5) .. controls (303.46,143.5) and (302,142.04) .. (302,140.25) -- cycle ;
		\draw    (277.75,139.76) -- (260.73,154.74) ;
		\draw    (259.77,125.74) -- (259.73,154.74) ;
		\draw    (259.77,125.74) -- (277.75,139.76) ;
		\draw    (277.75,139.76) -- (305.25,140.25) ;

		\end{tikzpicture}
		& -2  &  &  &  \\
		\hline

		9&\tikzset{every picture/.style={line width=0.75pt}} 
		
		\begin{tikzpicture}[x=0.75pt,y=0.75pt,yscale=-1,xscale=1]
		
		\draw   (224,105) -- (251.5,105) -- (251.5,132.5) -- (224,132.5) -- cycle ;
		\draw  [fill={rgb, 255:red, 0; green, 0; blue, 0 }  ,fill opacity=1 ] (220.75,105.25) .. controls (220.75,103.46) and (222.21,102) .. (224,102) .. controls (225.79,102) and (227.25,103.46) .. (227.25,105.25) .. controls (227.25,107.04) and (225.79,108.5) .. (224,108.5) .. controls (222.21,108.5) and (220.75,107.04) .. (220.75,105.25) -- cycle ;
		\draw  [fill={rgb, 255:red, 0; green, 0; blue, 0 }  ,fill opacity=1 ] (247.75,105.25) .. controls (247.75,103.46) and (249.21,102) .. (251,102) .. controls (252.79,102) and (254.25,103.46) .. (254.25,105.25) .. controls (254.25,107.04) and (252.79,108.5) .. (251,108.5) .. controls (249.21,108.5) and (247.75,107.04) .. (247.75,105.25) -- cycle ;
		\draw  [fill={rgb, 255:red, 0; green, 0; blue, 0 }  ,fill opacity=1 ] (248.75,132.25) .. controls (248.75,130.46) and (250.21,129) .. (252,129) .. controls (253.79,129) and (255.25,130.46) .. (255.25,132.25) .. controls (255.25,134.04) and (253.79,135.5) .. (252,135.5) .. controls (250.21,135.5) and (248.75,134.04) .. (248.75,132.25) -- cycle ;
		\draw  [fill={rgb, 255:red, 0; green, 0; blue, 0 }  ,fill opacity=1 ] (220.75,132.25) .. controls (220.75,130.46) and (222.21,129) .. (224,129) .. controls (225.79,129) and (227.25,130.46) .. (227.25,132.25) .. controls (227.25,134.04) and (225.79,135.5) .. (224,135.5) .. controls (222.21,135.5) and (220.75,134.04) .. (220.75,132.25) -- cycle ;

		\end{tikzpicture}
		& -3  &  &  &  \\
		\hline

		10&\tikzset{every picture/.style={line width=0.75pt}} 
		
		\begin{tikzpicture}[x=0.75pt,y=0.75pt,yscale=-1,xscale=1]
		
		\draw   (298,121) -- (325.5,121) -- (325.5,148.5) -- (298,148.5) -- cycle ;
		\draw  [fill={rgb, 255:red, 0; green, 0; blue, 0 }  ,fill opacity=1 ] (294.75,121.25) .. controls (294.75,119.46) and (296.21,118) .. (298,118) .. controls (299.79,118) and (301.25,119.46) .. (301.25,121.25) .. controls (301.25,123.04) and (299.79,124.5) .. (298,124.5) .. controls (296.21,124.5) and (294.75,123.04) .. (294.75,121.25) -- cycle ;
		\draw  [fill={rgb, 255:red, 0; green, 0; blue, 0 }  ,fill opacity=1 ] (321.75,121.25) .. controls (321.75,119.46) and (323.21,118) .. (325,118) .. controls (326.79,118) and (328.25,119.46) .. (328.25,121.25) .. controls (328.25,123.04) and (326.79,124.5) .. (325,124.5) .. controls (323.21,124.5) and (321.75,123.04) .. (321.75,121.25) -- cycle ;
		\draw  [fill={rgb, 255:red, 0; green, 0; blue, 0 }  ,fill opacity=1 ] (322.75,148.25) .. controls (322.75,146.46) and (324.21,145) .. (326,145) .. controls (327.79,145) and (329.25,146.46) .. (329.25,148.25) .. controls (329.25,150.04) and (327.79,151.5) .. (326,151.5) .. controls (324.21,151.5) and (322.75,150.04) .. (322.75,148.25) -- cycle ;
		\draw  [fill={rgb, 255:red, 0; green, 0; blue, 0 }  ,fill opacity=1 ] (294.75,148.25) .. controls (294.75,146.46) and (296.21,145) .. (298,145) .. controls (299.79,145) and (301.25,146.46) .. (301.25,148.25) .. controls (301.25,150.04) and (299.79,151.5) .. (298,151.5) .. controls (296.21,151.5) and (294.75,150.04) .. (294.75,148.25) -- cycle ;
		\draw    (298,121) -- (326,148.25) ;

		\end{tikzpicture}
		& -4 &  &  &  \\
		
		\hline	
		
		11&\tikzset{every picture/.style={line width=0.75pt}} 
		
		\begin{tikzpicture}[x=0.75pt,y=0.75pt,yscale=-1,xscale=1]
		
		\draw   (298,121) -- (325.5,121) -- (325.5,148.5) -- (298,148.5) -- cycle ;
		\draw  [fill={rgb, 255:red, 0; green, 0; blue, 0 }  ,fill opacity=1 ] (294.75,121.25) .. controls (294.75,119.46) and (296.21,118) .. (298,118) .. controls (299.79,118) and (301.25,119.46) .. (301.25,121.25) .. controls (301.25,123.04) and (299.79,124.5) .. (298,124.5) .. controls (296.21,124.5) and (294.75,123.04) .. (294.75,121.25) -- cycle ;
		\draw  [fill={rgb, 255:red, 0; green, 0; blue, 0 }  ,fill opacity=1 ] (321.75,121.25) .. controls (321.75,119.46) and (323.21,118) .. (325,118) .. controls (326.79,118) and (328.25,119.46) .. (328.25,121.25) .. controls (328.25,123.04) and (326.79,124.5) .. (325,124.5) .. controls (323.21,124.5) and (321.75,123.04) .. (321.75,121.25) -- cycle ;
		\draw  [fill={rgb, 255:red, 0; green, 0; blue, 0 }  ,fill opacity=1 ] (322.75,148.25) .. controls (322.75,146.46) and (324.21,145) .. (326,145) .. controls (327.79,145) and (329.25,146.46) .. (329.25,148.25) .. controls (329.25,150.04) and (327.79,151.5) .. (326,151.5) .. controls (324.21,151.5) and (322.75,150.04) .. (322.75,148.25) -- cycle ;
		\draw  [fill={rgb, 255:red, 0; green, 0; blue, 0 }  ,fill opacity=1 ] (294.75,148.25) .. controls (294.75,146.46) and (296.21,145) .. (298,145) .. controls (299.79,145) and (301.25,146.46) .. (301.25,148.25) .. controls (301.25,150.04) and (299.79,151.5) .. (298,151.5) .. controls (296.21,151.5) and (294.75,150.04) .. (294.75,148.25) -- cycle ;
		\draw    (298,121) -- (326,148.25) ;
		\draw    (325,121.25) -- (298,148.5) ;

		\end{tikzpicture}
		& -6  &  &  & \\
		\hline

	\end{tabular}
	
\end{table}

\begin{example}\label{G4}
In Table \ref{t1}, we have shown that the graphs of order $4$ are distinguished by the chromatic symmetric functions using Theorem \ref{mainthm}. We remark that the coefficients of $p_{\lambda}$ appearing in $X_G$ whichever is required to distinguish the graphs of order $4$ are given in Table \ref{t1}. These values are calculated using the following multiplicity formula from \cite[Corollary 3.9]{akv}. Assume $\bold k = (k_1,k_2,\dots,) \in \mathbb Z_+^{I}$ with finite support and satisfies $k_i \le 1$ for $i \in I^{re}$. Then
\begin{equation}\label{mult}
	\mult \eta(\bold k) = \sum\limits_{l | \bold k} \frac{\mu(l)}{l}|\pi^G_{\frac{\bold k}{l}}(q)[q]|.
\end{equation} 	where $\pi^G_{\frac{\bold k}{l}}(q)[q]$ denotes the coefficient of $q$ in $\pi^G_{\frac{\bold k}{l}}(q)$.


\end{example}
\begin{example}
	Consider $\lie g = \lie{sl}_3(\mathbb C)$ as in Example \ref{ssl3}. In this case, $\mathcal P(\bold 1) = \{\{\alpha_1+\alpha_2\},\{\alpha_1,\alpha_2\}\}$ and all the roots have multiplicity one [c.f. Lemma \ref{bij}]. The product side of the modified denominator identity of $\lie g$ is as follows. 
	$$U(X_i) = \prod_{\alpha\in \Delta_+}(1-X_i^{-\htt(\alpha)}e^{-\alpha})^{\text{ dim }\mathfrak{g}_{\alpha}} = (1-X_i^{-\htt(\alpha_1)}e^{-\alpha_1})(1-X_i^{-\htt(\alpha_1)}e^{-\alpha_2})(1-X_i^{-\htt(\alpha)}e^{-\alpha_1-\alpha_2}).$$
	We have  $\mult \,\{\alpha_1+\alpha_2\} = \mult \,\{\alpha_1,\alpha_2\} = 1$ as an element of $\mathcal P(\bold 1)$ where multiplicity of a set in $P(\bold 1)$ is defined to be the product of the multiplicity of its elements. Note that, the map $\Psi$ defined in Lemma \ref{bij} respects the multiplicity. Now, $$(-1)^2\sum_{\bold J \in \mathcal P(\bold 1)} (-1)^{|\bold J|} (\mult (\bold J)) \,p_{\type (\bold J)} = -p_{(2)} + p_{(1,1)}  = 2\,e_2 = X_G.$$

\end{example}
\begin{example}\label{ex9}
Consider $\lie g = \widehat{\lie{sl}_3(\mathbb C)}$ as in Example \ref{ssl3h}. In this case, $\Delta_+ = \Delta_+^0 \sqcup (\sqcup_{k>0}(\Delta^0 + k\delta)) \sqcup \{k\delta : k > 0\}$ where $\Delta^0$ is the root system of the underlying $\lie{sl}_3$ and $\delta$ is the null root. More precisely, $\Delta^0_+ = \{\alpha_1,\alpha_2,\alpha_1+\alpha_2\}$ and $\delta = \alpha_0 + \alpha_1 + \alpha_2$. Further, $\delta$ has multiplicity 2. We have 
	
	$\mathcal P(\bold 1) =\{ \{\alpha_0+\alpha_1+\alpha_2\}, \{{\alpha_0},\alpha_1+\alpha_2\}, \{\alpha_0+\alpha_1,\alpha_2\},\{\alpha_0+\alpha_2,\alpha_1\}, \{\alpha_0,\alpha_1,\alpha_2\}\}$.
	
	Now, $$(-1)^3\sum_{\bold J \in \mathcal P(\bold 1)} (-1)^k (\mult (\bold J)) \,p_{\type (\bold J)} = 2p_{(3)} - 3p_{(2,1)} + p_{(1^3)} = 3! e_3 = X_G$$
\end{example}

\begin{rem}
	Consider the bond lattice $L_G$ of weight $\bold 1$. Let $S \subseteq E(G)$. Consider the graph $G_S$ formed by the vertex set $I$ and the edge set $S$. Let $P_S=\{P_1,\dots,P_k\}$ be the vertex sets of the connected components of the graph $G_S$. Then $P_S \in L_G$.  Conversely let $P = \{P_1,\dots,P_k\} \in L_G$ and let $G(P_i)$ be the subgraph of $G$ induced by the set $P_i$. Then each $G(P_i)$ is a connected subgraph of $G$. Let $E_i$ be the edge set of $G(P_i)$ and $S_P := \cup_{i=1}^kE_i \subseteq E(G)$. Let $\sigma$ be the map from $\mathcal P(E(G))$ to $L_G$ which maps $S$ to $P_S$. We observe that $c(S)$-the number of components of the spanning subgraph $G_S$ is equal to $|\sigma(S)|$-the number of parts in the partition $\sigma(S) \in L_G$. We have the following well-known expression for chromatic polynomial due to Whitney \cite{whitney32}:
	\begin{equation}\label{whitney}
		\pi_{\bold 1}^G(q) = \sum\limits_{S \subseteq E}(-1)^{|S|}q^{c(S)}.
	\end{equation}  By Equation \ref{vv} we have $$\pi_{\bold 1}^G(q) = \sum_{\bold J \in L_G} (-1)^{l - |\bold J|} (\mult (\bold J)) \,q^{|\bold J|}$$ where $l := \htt (\eta(\bold 1))$. The above two equations implies that for a fixed $\bold J \in L_G$, \begin{equation}\label{con}
		\sum\limits_{\substack{S \subseteq E \\ \sigma(S) = \bold J}}(-1)^{|S|} = (-1)^{l-|\bold J|} \mult \bold J.
	\end{equation}
	
	In particular, if $\bold J$ is the partition with single part consists of the full vertex set $I$ of $G$ then we have \begin{equation}\label{4.3}
		\sum\limits_{\substack{S \subseteq E \\ \sigma(S) = \bold J}}(-1)^{|S|} = (-1)^{l-1} \mult \bold J = (-1)^{l-1} \mult (\eta(\bold 1)).
	\end{equation}
	
	Now, Theorem \ref{mainthm} and Equation \eqref{con} implies the following expression for the chromatic symmetric function \cite[Theorem 2.5]{S95}.
	
	$$X_G = \sum\limits_{S \subseteq E}(-1)^{|S|}p_{\type S}$$ where $\type S = \type P_S$. Hence if we apply the above arguments in Section \ref{prod}, last two lines above Example \ref{t1}, we get the above expression for the chromatic symmetric function from the denominator identity.
\end{rem}

\begin{rem}
	An edge cover of a graph $G$ is a set $S$ of edges such that every vertex of $G$ is incident to at least one edge of the set $S$. See \cite{akbari13,MR2838026,MR1099076} for combinatorial results on edge cover of graphs. If $\bold J$ is the partition with a single part consists of the full vertex set of $G$ then from Equation \eqref{4.3} we have $$	\sum\limits_{\substack{S \subseteq E \\ \psi(S) = \bold J}}(-1)^{|S|} = (-1)^{l-1} \mult \bold 1.$$ We observe that the subsets $S \subseteq E$ satisfying $\psi(S) = \bold J$ are precisely the edge covers of $G$. Now, Equation \eqref{4.3} gives an interesting formula for the multiplicity of the root $\eta(\bold 1)$ in terms of edge covers of $G$. For example, consider the null root $\delta = \alpha_0 + \alpha_1 + \alpha_2$ given in Example \ref{ex9}. The edge covers of the cycle graph on three vertices $\{\alpha_0,\alpha_1,\alpha_2\}$ are $\{\{\alpha_0,\alpha_1,\alpha_2\},\{\alpha_0,\alpha_1\},\{\alpha_0,\alpha_2\},\{\alpha_1,\alpha_2\}\}$. This shows that the multiplicity of the null root is equal to $2$. 

\end{rem}
Next, we prove Theorem \ref{mainkthm}.
\begin{rem}
	By Equation \eqref{1=k} and Theorem \ref{mainthm}, it is possible to recover the $\bold k$-chromatic symmetric function of $G$ from the modified denominator identity: More precisely, the $\bold k$-chromatic symmetric function of $G$ is given by the coefficient of $\frac{1}{\bold k!}e^{-\eta(\bold 1)}$ in the product $\prod_{i=1}^{\infty}U(X_i)$ where $U(X_i)$ is the modified denominator identity of the Borcherds algebra $\lie g(\bold k)$ corresponds to the graph $G(\bold k)$.  But it is possible to recover the $\bold k$-chromatic symmetric function from the modified denominator identity of $\lie g$ itself. We explain this fact in the following example.
\end{rem}

\begin{example}
	Let $\lie g$ be the Borcherds algebra corresponds to the $1 \times 1$ order Borcherds-Cartan matrix $\begin{bmatrix}
	-1 
	\end{bmatrix}$. Then $\lie g$ is generated by the three elements $<h,e,f>$ subject to the relations. $[h,h] =0$, $[h,e]=-e$ and $[h,f] = f$. The root system of $\lie g$ is given by $\Delta = \{\alpha,-\alpha\}$, the Weyl group $W = \{e\}$ and $\Omega = \{0,\alpha\}$. Therefore $$U(X_i) = 1 - X_i e^{-\alpha}.$$ Let $\bold k = (k)$, $k \in \mathbb N$. Consider the product
	$$\prod_{i=1}^{\infty} U(X_i) = (1 - X_1 e^{-\alpha})(1 - X_2 e^{-\alpha})\cdots(1 - X_k e^{-\alpha})\cdots$$ 
	The coefficient of $e^{-k\alpha}$ in the above product is equal to
	$$\sum_{\substack{J \,\subseteq \mathbb{N}\\J = \{i_1,i_2,\dots,i_k\}}}(-1)^k(X_{i_1}\cdots X_{i_k}) = (-1)^ke_k = (-1)^{\htt (\bold k)}X_G^{\bold k}.$$
\end{example}

Let $\lie g$ be a Borcherds algebra with the associated graph $G$. In view of the above example, we see that the coefficient of $e^{-\eta(\bold k)}$ in $\prod_{i=1}^{\infty}U(X_i)$ is equal to $(-1)^{\htt (\bold k)}X^{\bold k}_G$. The proof is exactly the same as the proof of Proposition \ref{helprop}. Now, an argument similar to the one given in Section \ref{prod} completes the proof of Theorem \ref{mainkthm}. 

\section{$G$-symmetric  functions and the denominator identity of Borcherds algebras}\label{G-syms}

\subsection{} In this subsection, we define the stable part of the modified denominator identity and prove that the stable part of the modified denominator identity is the same as the $G$-analogues of the elementary symmetry function. 


Let $\lie g$ be a Borcherds algebra with the associated graph $G$ and Weyl group $W$. For a Weyl group element $w\in W$,
we fix a reduced word $w=\bold {s}_{i_1}\cdots \bold{s}_{i_k}$ and let $I(w)=\{\alpha_{i_1},\dots,\alpha_{i_k}\}$. 
We recall that $I(w)$ is independent of the choice of the reduced expression of $w$. For $\gamma\in \Omega$, we set $I(\gamma)=\{\alpha\in \Pi^{\mathrm im} : \mbox{ $\alpha$ is a summand of $\gamma$}\}$. 
A pair $(w,\gamma) \in W \times \Omega$ is said to be stable if $I(w) \cup I(\gamma)$ is a stable set and we define $l((w,\gamma)) = \ell(w) + \htt(\gamma)$.  Let $\mathcal{S}(G)$ be the set of all stable subsets of $G$. It is clear that there exists a bijection $f$ between the set $\mathcal{S}(G)$ and $\{(w,\gamma) \in W \times \Omega : (w,\gamma) \text{ is stable}\}$. Further this map satisfies $l(f(S)) = |S|$.  Given this definition, the Weyl denominator can be rewritten as 
\begin{eqnarray}\label{stablepart}
U&=&  \sum_{(w,\gamma) \in W \times \Omega}  \epsilon(w,\gamma)  e^{w(\rho -\gamma)-\rho },  \\
&=& \sum_{\substack{(w,\gamma) \in W \times \Omega \\ \text{stable} }} \epsilon(w,\gamma)  e^{w(\rho -\gamma)-\rho } + \sum_{\substack{(w,\gamma) \in W \times \Omega \\ \text{not stable} }} \epsilon(w,\gamma)  e^{w(\rho -\gamma)-\rho }, \\
&=& U_1 + U_2 \,\,\,(\text{say}),
\end{eqnarray}  
where $\epsilon(w,\gamma) = (-1)^{\ell(w)}(-1)^{\htt(\gamma)}$. 

We define $U_1$ to the stable part of the Weyl denominator $U$. The stable part $U_1(X)$ of the modified Weyl denominator $U(X)$ is defined similarly. 
We want to find a relation between $e^G_i$ and the stable part of the Weyl denominator $U$ [c.f. Definition \ref{G-symd}]. First, we will investigate the terms $e^{w(\rho-\gamma)-\rho}$ that are appearing in $U_1$. Now, from Lemma \ref{helplem}, it is easy to see that  
\begin{equation}\label{U=E}
	U_1 = \sum_{k \ge 0}^{\alpha(G)} \sum_{\substack{(w,\gamma) \in W \times \Omega \\ \text{stable} \\ l((w,\gamma)) = k }}(-1)^k e^{w(\rho-\gamma)-\rho}= \sum_{k \ge 0}^{\alpha(G)}\sum_{\substack{S\text{-stable} \\ |S|=k }}(-1)^ke(S) =  \sum_{k \ge 0}^{\alpha(G)}(-1)^ke_k^G,
\end{equation}
where $\alpha(G)$ is the independence number of $G$ and $e(S) = \prod_{\alpha \in S}e^{-\alpha}$. In particular, we have $$  e_k^G = \sum_{\substack{(w,\gamma) \in W \times \Omega \\ \text{stable} \\ l((w,\gamma)) = k }}e^{w(\rho-\gamma)-\rho}.$$  
This shows that $e_k^G$ can be obtained from the Weyl denominator.

\begin{rem}
	Given a partition $\lambda = \lambda_1 \ge \lambda_2 \ge \dots \ge \lambda_k$, $k \in \mathbb{N}$, we have $$e_{\lambda}^G= \prod_{i=1}^k e_{\lambda_i}^G = \prod_{i=1}^k \Big(\sum_{\substack{(w,\gamma) \in W \times \Omega \\ \text{stable} \\ l((w,\gamma)) = \lambda_i }}e^{w(\rho-\gamma)-\rho}\Big) = \sum_{\substack{((w_1,\gamma_1),(w_2,\gamma_2),\dots,(w_k,\gamma_k)) \in (W \times \Omega)^k \\ (w_i,\gamma_i)\text{-stable} \\ l((w_i,\gamma_i)) = \lambda_i }}e^{w(\rho-\gamma)-\rho} = \sum_{\substack{(S_1,S_2,\dots,Sk) \\ S_i\text{-stable} \\ |S_i| = \lambda_i }}e(S).$$

	 Suppose $(w,\gamma) \in W \times \Omega$ is stable, then $|I(w) \cup I(\gamma)| = \ell(w)+\htt(\gamma) = -\htt(w(\rho-\gamma)-\gamma)$. Again, by Lemma \ref{helplem}, it is easy to see that $$U_1(X) = \sum_{i \ge 0} X^i e_i^G.$$
\end{rem}

\begin{defn}
	A partition $\lambda = (\lambda_1,\dots,\lambda_k)$ is said to be a stable number partition of $G$ if $1 \le \lambda_i \le \alpha(G)$ for all $1 \le i \le k$. The set of all stable number partition of the graph $G$ is denoted by $SP(G)$.  
	
\end{defn}

\begin{prop}\label{U=T}
	With the notations as above, we have
$$\prod_{i = 1}^{\infty}U_1(X_i) = \sum_{\substack {\lambda \\ \text{stable}}} \epsilon(\lambda) M_{\lambda}(x) e_{\lambda}^G $$ where $\epsilon(\lambda) = (-1)^{\sum_{i=1}^k \lambda_i}$ for a partition $\lambda = (\lambda_1,\dots,\lambda_k)$.  
\begin{pf} 
\begin{align*}
\prod_{i = 1}^{\infty}U_1(X_i) &= \prod_{i = 1}^{\infty}  \Big(\sum_{\substack{(w,\gamma) \in W \times \Omega \\ \text{stable}} } (-1)^{(\ell(w)+\htt(\gamma))}  X_i^{\htt(w(\rho -\gamma)-\rho)} e^{w(\rho -\gamma)-\rho } \Big)
\\&:= \sum_{k = 1}^{\infty}  \sum_{\substack{J \,\subseteq\, \mathbb{N}\\J = \{i_1,i_2,\dots,i_k\}}} \prod_{j = 1}^{k}  \Big(\sum_{\substack{(w,\gamma) \in W \times \Omega \\ \text{stable}}} (-1)^{(\ell(w)+\htt(\gamma))}  X_{i_j}^{\htt(w(\rho -\gamma)-\rho)} e^{w(\rho -\gamma)-\rho } \Big)
\\&= \sum_{k = 1}^{\infty}  \sum_{\substack{J \,\subseteq\, \mathbb{N}\\J = \{i_1,i_2,\dots,i_k\}}} \sum_{\substack{((w_1,\gamma_1),(w_2,\gamma_2),\dots,(w_k,\gamma_k)) \in (W \times \Omega)^{k} \\ (w_i,\gamma_i)-\text{stable}}}  \prod_{j=1}^{k}\Big((-1)^{(\ell( w_j)+\htt(\gamma_j) }X_{i_j}^{\htt(w_j(\rho -\gamma_j)-\rho)}e^{w_j(\rho -\gamma_j)-\rho }\Big)
\end{align*}
From Lemma~\ref{helplem}, we get 
$$w(\rho)-\rho-w(\gamma)=-\gamma-\sum_{\alpha\in I(w)}\alpha \text{ and $\htt(w(\rho)-\rho-w(\gamma)) = \ell(w) + \htt(\gamma)$ if $(w,\gamma)$ is stable}.$$ 

This implies that,
\begin{align*}
\prod_{i = 1}^{\infty}U_1(X_i) &= \sum_{k = 1}^{\infty}  \sum_{\substack{J \,\subseteq \mathbb{N}\\J = \{i_1,i_2,\dots,i_k\}}} \sum_{\substack{((w_1,\gamma_1),(w_2,\gamma_2),\dots,(w_k,\gamma_k)) \in (W \times \Omega)^{k} \\ (w_i,\gamma_i)-\text{stable}}}  \prod_{j=1}^{k}\Big((-1)^{\ell( w_j)+\htt(\gamma_j) }X_{i_j}^{\ell(w_j) + \htt(\gamma_j)}e(S_j)\Big) 
\end{align*}
where $S_j = I(w_j) \cup I(\gamma_j)$ and $e(S_i) = e^{-\sum_{\alpha\in S_j}\alpha}$. 
Using the above defined bijection $f$ between the set $\mathcal{S}(G)$ and $\{(w,\gamma) \in W \times \Omega : (w,\gamma) \text{ is stable}\}$ we get,
\begin{align*}
&\text{Right hand side of the above equation}= \sum_{k = 1}^{\infty}  \sum_{\substack{J \,\subseteq \mathbb{N}\\J = \{i_1,i_2,\dots,i_k\}}} \sum_{\substack{(S_1,S_2,\dots,S_k) \\ S_i-\text{stable}}}  \prod_{j=1}^{k}\Big((-X_{i_j})^{|S_j|}e(S_j)\Big), \\
& = \sum_{k = 1}^{\infty}  \sum_{\substack{(S_1,S_2,\dots,S_k) \\ |S_1| \ge |S_2| \ge \dots \ge |S_k| \\ S_i-\text{stable}}}  \sum_{\substack{(S_{m_1},S_{m_2},\dots,S_{m_k}) \\ \text{permutation of }(S_1,S_2,\dots,S_k)}}  \sum_{\substack{J \,\subseteq \mathbb{N}\\J = \{i_1,i_2,\dots,i_k\}}} \Big(e(S_{m_1},S_{m_2},\dots,S_{m_k})\Big)  \Big(\prod_{j=1}^{k}(-X_{i_j})^{|S_{m_j}|}\Big), \\
&= \sum_{\substack{\lambda \\ \text{stable}}} \epsilon(\lambda) M_{\lambda}(x) e_{\lambda}^G,
\end{align*}
where $e(S_1,S_2,\dots,S_k) = \prod_{i=1}^k e(S_i)$. 
\end{pf}
\begin{rem}
 Propositions \ref{U=T} and \ref{helprop} together leads to an alternate proof of Proposition \ref{T=X}.
\end{rem}
\end{prop}

\subsection{$G$-power sum symmetric functions and the modified Weyl denominators}
In this subsection, we prove Theorem \ref{power}. We start with the definition of $G$-power sum symmetric function which is defined in terms of $G$-elementary symmetric functions. The following relation is proved in \cite{S98}. 
\begin{equation}\label{e=p}
	-\log(1-e_1^GX+e_2^GX^2-e^G_3X^3+\cdots) = p_1^GX + p_2^G\frac{X^2}{2} + p_3^G\frac{X^3}{3}+\cdots
\end{equation}
The $G$ analogues of power sum symmetric functions are defined using the above equation. To prove Theorem \ref{power}, it is enough to prove it for $p_n^G$ ($n \in \mathbb{N}$).  Let $\lie g = \lie g(-A)$ where $A$ is the adjacency matrix of $G$. Then all the simple roots of $\lie g$ are imaginary and the modified denominator identity \eqref{mdenom} of $\lie g$ becomes 
	\begin{equation}\label{free}
	U:= \sum_{\gamma \in
		\Omega} (-1)^{-\htt(\gamma)} X^{-\htt(\gamma)} e^{-\gamma}  = \prod_{\alpha \in \Delta_+} (1 - X^{-\htt(\alpha)}  e^{-\alpha})^{\dim \mathfrak{g}_{\alpha}}. 
	\end{equation}
We observe that, since all the simple roots are imaginary, the stable part $U_1$ of $U$ is itself.	Equation \eqref{U=E} and the remark which follows Equation \eqref{U=E}  implies that  $$U_1(-X) = \sum_{i \ge 0} (-X)^i e_i^G.$$
From Equation \eqref{e=p}, the coefficient of $\frac{X^n}{n}$ in $-\log(U(-X))$ is equal to $p_n^G$. Now, we calculate the same coefficient using the product side of Equation \eqref{free}. 
\begin{align*}
	-\log \Big(\prod_{\alpha \in \Delta_+} (1 - X^{-\htt(\alpha)}  e^{-\alpha})^{\dim \mathfrak{g}_{\alpha}}\Big) &= \sum_{ \alpha \in \Delta_+} \dim \lie g_{\alpha} \Big(-\log(1 - X^{-\htt(\alpha)}  e^{-\alpha})\Big), \\
	&= \sum_{ \alpha \in \Delta_+} \dim \lie g_{\alpha} \Big(\sum_{k \ge 1}\frac{(X^{-\htt(\alpha)}  e^{-\alpha})^k}{k}\Big),\\
	&= \sum_{k \ge 1} \sum_{m \ge 1} \sum_{\substack{\alpha \in \Delta_+ \\ \htt(\alpha) = m}} ((m) (\dim \lie g_{\alpha})  (e^{-k \alpha} )) \frac{X^{m k}}{m k}.
\end{align*}
Hence, the coefficient of $\frac{X^n}{n}$ in $-\log(U(X))$ is equal to 
\begin{align*}
\sum_{k | n} \Big( \sum_{ \substack{\alpha \in \Delta_+ \\ \htt(\alpha) = \frac{n}{k}}}(\frac{n}{k}) (\dim \lie g_{\alpha} )(e^{-k \alpha})\Big) &= \sum_{k | n} \Big( \sum_{ \substack{\frac{\alpha}{k} \in \Delta_+ \\ \htt(\frac{\alpha}{k}) = \frac{n}{k}}}(\frac{n}{k}) (\dim \lie g_{\frac{\alpha}{k}} )(e^{- \alpha})\Big) \\
& = \sum_{ \substack{\alpha \in \Delta_+ \\ \htt(\alpha) = n}}\Big(\sum_{k | \alpha}  (\frac{n}{k}) (\dim \lie g_{\frac{\alpha}{k}} )\Big)e^{- \alpha}\\
&= \sum_{k \ge 1} \sum_{m \ge 1} \sum_{\substack{\alpha \in \Delta_+ \\ \htt(\alpha) = m}} ((m) (\dim \lie g_{\alpha})  (e^{-k \alpha} )) \frac{X^{m k}}{m k}.
\end{align*}
This shows that $$p_n^G = \sum_{ \substack{\alpha \in \Delta_+ \\ \htt(\alpha) = n}}\Big(\sum_{k | \alpha}  (\frac{n}{k}) (\dim \lie g_{\frac{\alpha}{k}} )\Big)e^{- \alpha}$$ and the theorem follows.

\begin{rem}
	
	We have shown that the chromatic symmetric function can be recovered from the modified denominator identity. It is possible to recover the chromatic symmetric function from the denominator identity itself: Consider $U = \sum_{w \in W } (-1)^{\ell(w)} \sum_{ \gamma \in
		\Omega} (-1)^{\htt(\gamma)} e^{w(\rho -\gamma)-\rho }$ the sum side of the denominator identity given in Equation \ref{denominator}. By Lemma \ref{helplem},   $$e^{w(\rho -\gamma)-\rho } = \prod_{\alpha \in I}X_{\alpha}^{b_{\alpha}(w,\gamma)} \in \mathbb C[X_{\alpha}: \alpha \in I]$$ where $X_{\alpha} = e^{-\alpha}$. For a stable subset $S$ of $G$, we define $X_S:=\prod_{\alpha \in S}X_{\alpha} \in \mathbb C[X_{\alpha}: \alpha \in I]$. Then the stable part of $U = U_1 = \sum\limits_{S \text{-stable}}X_S$ [c.f. Equation \eqref{stablepart}]. 
	
	We consider the subalgebra $\mathcal A:= \mathbb C[\prod_{i=1}^{m}X_{S_i} : \{S_1,\dots,S_m\}\text{ is a set of stable sets for some }m]$ of $\mathbb C[X_{\alpha} : \alpha \in I]$. Define a linear map $\mathcal F : \mathcal A \rightarrow \mathbb C[[X_i : i \in \mathbb N]]$ by: 
	
	
	$$\mathcal F (\prod_{i=1}^{m}X_{S_i}) = \sum\limits_{\substack{J \subseteq \mathbb N \\ J = \{i_1,\dots,i_m\}}}\prod_{i=1}^mX_{i_1}^{|S_i|}$$ for any set of stable sets $\{S_1,\dots,S_m\}$. 
	Then, $$\mathcal F (\sum_{m \ge 1}U_1^m) = \mathcal F(\sum_{m \ge 1}\sum\limits_{\substack{(S_1,\dots,S_m) \\ S_i \text{-stable}}} \prod_{i=1}^{m}X_{S_i}) = \sum_{m \ge 1}\sum\limits_{\substack{(S_1,\dots,S_m) \\ S_i \text{-stable}}} \mathcal F(\prod_{i=1}^{m} X_{S_i}) =\sum_{m \ge 1}\sum\limits_{\substack{(S_1,\dots,S_m) \\ S_i \text{-stable}}} \sum\limits_{\substack{J \subseteq \mathbb N \\ J = \{i_1,\dots,i_m\}}}\prod_{i=1}^mX_{i_1}^{|S_i|}.$$
	Now, it is immediate that, $$\sum_{m \ge 1}\sum\limits_{\substack{(S_1,\dots,S_m) \\ \text{Stable partition of }G}} \mathcal F(\prod_{i=1}^{m} X_{S_i}) = \sum_{m \ge 1}\sum\limits_{\substack{(S_1,\dots,S_m) \\ \text{Stable partition of }G}} \sum\limits_{\substack{J \subseteq \mathbb N \\ J = \{i_1,\dots,i_m\}}}\prod_{i=1}^mX_{i_1}^{|S_i|} = X_G$$ 
	This way we can get the chromatic symmetric function from the denominator identity (Equation \eqref{denominator}). But this approach cannot be implemented in the product side of the denominator identity. So we prefer the approach through the modified denominator identity.  
\end{rem}

We discuss here a little about the future directions of this paper. In this paper, we have made a connection between chromatic symmetric functions and Borcherds algebras. In this direction, we can make a connection between chromatic symmetric functions and various other branches. The advantage of making such a connection is the tools from different areas can be applied in the problems of the theory of chromatic symmetric functions.

\begin{enumerate}
	\item 	Example \ref{G4} shows that root multiplicities of Borcherds algebras can be used to distinguish graphs by their chromatic symmetric functions. Given a graph $G$ we can associate a Borcherds algebra to it in many ways [c.f. Section \ref{graph}]. It is instructive to check whether given class of non-isomorphic graphs of order $n$ are distinguished by their chromatic symmetric functions using the root multiplicities of the associated Borcherds algebras (using Theorem \ref{mainthm}). 
	
	\item 	Borcherds algebras have a super analogue which is called Borcherds-Kac-Moody Lie superalgebras (BKM algebras in short) \cite{waki01}. The main difference here is that we are allowed to have odd roots. These algebras also have a similar but more complicated denominator identity \cite[Section 2.5]{waki01}. We can prove a connection between the root multiplicities of BKM algebras and the chromatic symmetric function of the associated graph $G$. In a different project under progress, we have already proved such connection for $\bold k$-chromatic polynomials which is further used construct basis for the root spaces of BKM superalgebras using heaps of pieces. 
	
	\item 	Heaps of pieces were introduced by Xavier Viennot in \cite{MR1110852} which has many applications in various branches of Mathematics and Physics. In \cite{a3} we have proved the various connection between the heaps of pieces and $\bold k$-chromatic polynomials. In particular, the heap theoretic analogue of the denominator identity of free partially commutative Lie algebra is discussed. We can further study this connection in the level of chromatic symmetric functions which will give us a connection between the heaps of pieces and the chromatic symmetric functions. 
	
	\item 	Also, we can reprove the existing results in the chromatic symmetric function theory using our Lie theoretic methods. This way the tools from Borcherds algebras can be used in the chromatic symmetric function theory.
	
	\item 	In the introduction we have discussed the relation between the Macdonald identities and the denominator formula of affine Lie algebras. We can look for similar results in the case of the modified denominator identity (Equation \eqref{mdenom}).
\end{enumerate}

\bibliographystyle{plain}
\bibliography{Arunkumar}

\end{document}